\documentclass[11pt]{article}
\usepackage[margin =1in]{geometry}
\usepackage{amssymb,amsthm,amsmath}
\usepackage{enumerate}
\usepackage{hyperref}
\usepackage{cite}
\usepackage[capitalize]{cleveref}
\usepackage{enumitem}
\usepackage{color}
\usepackage{cancel}
\usepackage{pgf}
\usepackage{svg}
\usepackage{pgfplots}
\usepackage{import}

\usepackage[utf8]{inputenc} 
\usepackage[T1]{fontenc}    
\usepackage{hyperref}       
\usepackage{url}            
\usepackage{booktabs}       
\usepackage{amsfonts}       
\usepackage{nicefrac}       
\usepackage{microtype}
\usepackage{multirow}
\usepackage{xfrac}
\usepackage{todonotes}
\usepackage{titlesec}
\usepackage{caption}
\usepackage{subcaption}
\usepackage{algorithm}
\usepackage{algorithmic}
\usepackage{tcolorbox}
\tcbset{
  myboxstyle/.style={
    colback=blue!3!white,       
    colframe=blue!50!black,     
    boxrule=1pt,              
    arc=3pt,                    
    left=6pt, right=6pt, top=4pt, bottom=4pt,
  }
}

\titleformat{\subsubsection}[runin]
{\normalfont\normalsize\bfseries}{\thesubsubsection}{1em}{}

\usepackage{graphicx}

\usepackage{appendix}
\numberwithin{equation}{section}

\newcommand{\argmin}{\mathrm{argmin}}

\newtheorem{thm}{Theorem}[section]
\newtheorem{theorem}{Theorem}[section]

\newtheorem{lemma}[thm]{Lemma}

\crefname{claim}{claim}{claims}
\Crefname{claim}{Claim}{Claims}
\crefname{lem}{lemma}{lemmas}
\Crefname{lem}{Lemma}{Lemmas}
\crefname{algorithm}{algorithm}{algorithms}
\Crefname{algorithm}{Algorithm}{Algorithms}

\theoremstyle{remark}

\allowdisplaybreaks

\definecolor{blue}{rgb}{0,0,0}
\definecolor{red}{rgb}{0,0,0}

\begin{document}

	\title{Lower Bounds for Linear Minimization Oracle Methods\\ Optimizing over Strongly Convex Sets}

	  \author{Benjamin Grimmer\footnote{Johns Hopkins University, Department of Applied Mathematics and Statistics, \url{grimmer@jhu.edu}} \qquad  Ning Liu\footnote{Johns Hopkins University, Department of Applied Mathematics and Statistics, \url{nliu15@jhu.edu}}}

	\date{}
	\maketitle

	\begin{abstract}
    We consider the oracle complexity of constrained convex optimization given access to a Linear Minimization Oracle (LMO) for the constraint set and a gradient oracle for the $L$-smooth, $L$-strongly convex objective. This model includes Frank-Wolfe methods and their many variants. Over the problem class of $\alpha$-strongly convex constraint sets $S$, {\color{red} we demonstrate that one can construct hard ``zero-chain'' instances in the classical style of Nemirovski and Yudin. From our new approach to adversarial oracle construction, we} prove that no such deterministic method can guarantee a final objective gap less than $\varepsilon$ in fewer than $\Omega(\sqrt{L\, \mathrm{diam}(S)^2/\varepsilon})$ iterations. Our lower bound {\color{red} partly} matches the accelerated Frank-Wolfe theory of Garber and Hazan~\cite{garber2015faster} of $O(\sqrt{L(\mathrm{diam}(S)^2{\color{red}+1/\alpha^2})/\varepsilon})$. Second, we consider optimization over $\beta$-smooth sets, finding that in the modestly smooth regime of $\beta=\Omega(1/\sqrt{\varepsilon})$, no complexity improvement for span-based LMO methods is possible against either compact convex sets or strongly convex sets.
\end{abstract}

    \section{Introduction}
In this work, we consider convex constrained optimization problems of the form
\begin{equation} \label{eq:main}
    \begin{cases}
        \min & f(x)\\
        \mathrm{s.t.\ } & x\in S \subseteq \mathbb{R}^d.
    \end{cases}    
\end{equation}
In particular, we consider high-dimensional problems where $d$ may be arbitrarily large.
Frank-Wolfe methods and the broader family of ``projection-free'' algorithms using a linear minimization subroutine have found renewed interest due to their scalability. See the survey~\cite{braun2025conditionalgradientmethods} and references therein. In a basic form, these methods iterate from an initialization $x_0$, producing for $k=0,1,\dots$
\begin{align*}    
    p_k &= \nabla f(x_k),\\
    z_{k+1} & \in\argmin_{x\in S}\langle p_k, x\rangle,\\
    x_{k+1} & \in \operatorname{conv}\{x_k, z_{k+1}\}.
\end{align*}
As examples, an exact line search implementation of Frank-Wolfe would set $x_{k+1}$ as the minimizer of $f$ on the segment $[x_k,z_{k+1}]$. Similarly, a fixed ``open-loop'' stepsize implementation would fix $x_{k+1} = \theta_k x_k + (1-\theta_k)z_{k+1}$ for predetermined $\theta_k\in [0,1]$.
The two key computational oracles assumed here are access to gradients of the objective $f$ to compute $p_k$ and access to a Linear Minimization Oracle (LMO) to produce $z_{k+1}$. As a shorthand, we denote $\mathtt{LMO}_S(p) \in \argmin_{x\in S}\langle p, x\rangle$ as an oracle providing a selection of this minimizer (potentially selected adversarially).

In this work, we provide complexity lower bounds {\color{blue}for families} of first-order methods using a linear minimization oracle. {\color{blue} Our theory covers any deterministic method and any methods remaining in the span of observed gradients and LMO solutions, containing the above Frank-Wolfe methods.} We focus on lower bounds for problem classes where the constraint set $S$ possesses structural properties like strong convexity or smoothness. Below, we formalize these classes of problems and algorithms.

\paragraph{The Families of Smooth and Strongly Convex Constraint Sets and Functions} {\color{blue} Here we consider} problem instances defined by a differentiable function $f\colon \mathbb{R}^d\rightarrow \mathbb{R}$ and a set $S\subseteq\mathbb{R}^d$ as well as a {\color{blue} feasible} initialization $x_0{\color{blue} \in S}$. {\color{blue} Noting the translation invariance of the optimization problem~\eqref{eq:main}, the choice of $x_0$ can be fixed as an arbitrary feasible point. Our analysis will often fix $x_0 = 0$ and consider constraint sets $S$ containing the origin without loss of generality. One could consider algorithms with an arbitrary (not necessarily feasible) initialization. By developing lower bounds for problems with feasible $x_0$, we immediately provide bounds on this larger class.}

A problem class is defined by a set of allowable $f$ and $S$ values. We consider the standard family of convex objective functions parameterized by $0\leq\mu\leq L$, defined as
\begin{align*}
    \text{$f$ is $\mu$-strongly convex if\ }& f(\lambda x + (1-\lambda)y)\leq \lambda f(x) + (1-\lambda)f(y) - \frac{\lambda(1-\lambda)\mu}{2}\|y-x\|_2^2\\
    &\hskip5.6cm \forall x,y\in \mathbb{R}^d, \lambda\in[0,1], \\
    \text{$f$ is $L$-smooth if\ } & \|\nabla f(y)-\nabla f(x)\|_2\leq L\|y-x\|_2 \quad \forall x,y\in \mathbb{R}^d.
\end{align*}
If $L=\mu$, then $f$ must be a quadratic function of the form $\frac{L}{2}\|x-x_\star\|_2^2$ up to an additive constant. Such simple quadratics will suffice for our theoretical development.

We consider compact convex sets $S$ with diameter $\mathrm{diam}(S)=\max_{x,y\in S}\|x-y\|_2$ with at least one of the natural parallel notions of strong convexity and smoothness for constraint sets. We say
$$ \text{$S$ is $\alpha$-strongly convex if\quad } \lambda x + (1-\lambda)y + B\left(0,\ \frac{\lambda(1-\lambda)\alpha}{2}\|y-x\|_2^2\right) \subseteq S $$
for all $x,y\in S, \lambda\in [0,1]$ where $B(0,r)=\{x\in\mathbb{R}^d \mid \|x\|_2\leq r\}$ denotes the closed ball of radius $r$. Normal vectors $n \in N_S(x) := \{n \mid \langle n, y-x\rangle \leq 0\ \forall y\in S\}$ at each $x\in\operatorname{bdry}S$ serve as the analogues of gradients. Considering any unit length normal vectors, we define
\begin{align*}
    \text{$S$ is $\beta$-smooth if\quad } \|n_y - n_x\|_2 \leq \beta \|y-x\|_2 \quad \forall &x,y\in\operatorname{bdry}S,\\
    &n_y\in \{n\in N_S(y) \mid \|n\|_2=1\},\\
    &n_x\in \{n\in N_S(x) \mid \|n\|_2=1\}.
\end{align*}
Note that the above definition requires that smooth sets have a unique unit normal vector at each boundary point.
For a classical reference providing equivalent characterizations of the strong convexity of a set, see~\cite{vial1982strong}. A modernized treatment of smoothness and strong convexity of sets was given by~\cite{liu2023gauges}. Therein, parallels to the smoothness and strong convexity of functions are explored.\\

\paragraph{The Family of First-Order Linear Minimization Oracle Methods (\texttt{FO-LMO})}
For a given problem instance $(f,S)$ and initialization $x_0 \in S$, a \texttt{FO-LMO} method generates sequences of search directions $p_k\in\mathbb{R}^d$, linear minimization solutions $z_{k+1} = \mathtt{LMO}_S(p_k)\in\argmin_{x\in S}\langle p_k, x\rangle$, and iterates $x_{k+1}$. We require that $p_k$ is a deterministic function of the oracle responses $\{({\color{blue} x_\ell},f(x_\ell),\nabla f(x_\ell))\}^k_{\ell=0}$ and $\{z_{\ell}\}^k_{\ell=1}$ so far and that $x_{k+1}$ is a deterministic function of the responses $\{({\color{blue} x_\ell}, f(x_\ell),\nabla f(x_\ell))\}^k_{\ell=0}$ and $\{z_{\ell}\}^{k+1}_{\ell=1}$. Note that we do not require that $x_{k+1}$ lie in the convex hull of $x_0$ and $\{z_{\ell}\}_{\ell=1}^{k+1}$. However, when outside this convex hull, the iterates may fail to be feasible.

{\color{blue} Note that the \texttt{FO-LMO} model does not include linesearch methods: Determining a minimizer on a line or subspace is not a deterministic function of past first-order observations. In Section~\ref{sec:SC}, we consider an alternative algorithm model, dubbed \texttt{LMO-span} methods, allowed to select each iterate anywhere within a span associated with the observed values so far, enabling linesearches. Together, these two families of methods include}, for example, the Away-step Frank-Wolfe methods~\cite{lacoste2015global,beck2017linearly}, Pairwise Frank-Wolfe methods~\cite{lacoste2015global,tsuji2022pairwise}, Fully Corrective Frank-Wolfe methods~\cite{holloway1974extension,lacoste2015global}, and Blended Pairwise Frank-Wolfe methods~\cite{braun2019blended}.

Each iteration of these algorithms can make one call to a first-order oracle, returning $(f(x_k), \nabla f(x_k))$, and one call to the LMO. Our theory then bounds iteration complexity to measure the minimum number of such pairs of oracle calls needed to {\color{blue} produce a feasible point with $\varepsilon$-suboptimality (i.e., $x_T\in S$ and $f(x_T) - \min_{x\in S}f(x) \leq \varepsilon$)}. \\


For the minimization of an $L$-smooth convex function $f$ over a compact convex set $S$, Frank-Wolfe methods~\cite{frank1956algorithm,jaggi2013revisiting} are known to provide convergence rates, {\color{blue} for any $T>0$, at $x_T\in S$}
$$ f(x_T) - \min_{x\in S}f(x) \leq \mathcal{O}\left(\frac{L\, \mathrm{diam}(S)^2}{T}\right). $$
This provides an upper bound on the iteration complexity of computing a {\color{blue} feasible} point with $\varepsilon$-suboptimality (i.e., $f(x_T)-\min_{x\in S}f(x)\leq\varepsilon$) of $\mathcal{O}(L\, \mathrm{diam}(S)^2/\varepsilon)$. A matching complexity lower bound was provided by Lan~\cite{lan2013complexity}, establishing this as the order of the optimal LMO complexity.
Garber and Hazan~\cite{garber2015faster} showed that given the additional structure that $f$ is $\mu$-strongly convex and $S$ is $\alpha$-strongly convex, this rate can be accelerated to have
\begin{equation} \label{eq:garber-hazan-rate}
    f(x_T) - \min_{x\in S}f(x) \leq \mathcal{O}\left(\frac{L(\mathrm{diam}(S)^2 {\color{red} + L/(\mu\alpha^2)})}{T^2}\right)
\end{equation}
{\color{blue} and $x_T\in S$ for any $T>0$,} giving a $\mathcal{O}(\sqrt{L(\mathrm{diam}(S)^2{\color{red}+L/(\mu\alpha^2)})/\varepsilon})$ iteration complexity. This accelerated result poses the natural question of whether further acceleration is possible or if this complexity is the optimal strongly convex order.

Providing recent progress on this question, Halbey et al.~\cite{halbey2026lower} showed that two particular variants of Frank-Wolfe (exact line search and the short stepsize procedure) cannot improve upon this $\mathcal{O}(1/T^2)$ rate. However, these results do not preclude the possibility of other \texttt{FO-LMO} methods exceeding this rate.
Similar limitations in unconstrained minimization were overcome by the foundational work~\cite{nemirovski1983problem}, establishing information/oracle complexity lower bounds against the whole family of gradient methods. This was accomplished by the design of hard functions whose gradient reveals only one new coordinate of information per step. This property, known as a ``zero-chain'' property, can prevent {\it any gradient-span method} from having made substantial progress until dimension-many steps have been taken. By combining {\color{blue} such a hard instance} with an adversarial ``resisting oracle'', {\color{blue} a family of hard instances can be constructed, providing} lower bounds against {\it all deterministic gradient methods}.\\

\paragraph{Our Contributions} We extend the classical zero-chain approach to LMOs to derive complexity lower bounds on any \texttt{FO-LMO} method over the problem classes corresponding to minimizing a quadratic $f(x)=\frac{L}{2}\|x-x_\star\|_2^2$ over structured sets, possessing either strong convexity or smoothness. From such a construction, we show that over $\alpha$-strongly convex constraints, no \texttt{FO-LMO} method can guarantee $\varepsilon$-suboptimality {\color{blue} at a feasible iterate} in fewer than $\Omega(\sqrt{L\, \mathrm{diam}(S)^2/\varepsilon})$ iterations. {\color{red} Although this does not capture the second term in the rate of}~\cite{garber2015faster} {\color{red} dependent on $\alpha$, this provides progress towards determining the exact} optimal order of complexity for this class. Formally, we prove the following in Section~\ref{sec:general}.

\begin{theorem}\label{thm:main}
Consider any $T \ge 1$ and $L,\alpha>0$. Then {\color{blue} for every \texttt{FO-LMO},} there exist an $\alpha$-strongly convex set $S$ and an $L$-smooth, $L$-strongly convex function $f$ in dimension $d=2(T+1)$ such that the  method applied to $(f, S)$ with $x_0 = 0$ has $x_T$ infeasible or 
\[
f(x_T) - \min_{x\in S}f(x) \geq \frac{1}{528}\frac{L\, \mathrm{diam}(S)^2}{(T+1)^2}.
\]
Consequently, for any $\varepsilon>0$, there exist problem instances where $T\geq \sqrt{\frac{L\, \mathrm{diam}(S)^2}{528\varepsilon}} - 1$ iterations are required to {\color{blue} find a feasible $x_T$ with} $f(x_T) - \min_{x\in S}f(x) \leq \varepsilon$.
\end{theorem}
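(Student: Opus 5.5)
We plan to build a zero-chain instance together with a resisting oracle, and then argue that after $T$ steps all iterates lie in a subspace that stays far from the minimizer. Fix $d=2(T+1)$ and coordinate pairs $(e_{2i-1},e_{2i})$ for $i=1,\dots,T+1$. The objective will be $f(x)=\frac{L}{2}\|x-x_\star\|_2^2$, with $x_\star$ chosen outside $S$ so that the constrained minimizer $\bar x=\Pi_S(x_\star)$ lies on the boundary of $S$ and has a component of comparable size in every coordinate block. The set $S$ will be $\alpha$-strongly convex because it is an intersection of Euclidean balls of radius $R\le 1/\alpha$; a ball of radius $R$ is $1/R$-strongly convex, and intersections preserve this. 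Scaling gives the diameter freedom: we take $R=1/\alpha$ and let $\mathrm{diam}(S)$ be determined by the construction. The target bound only involves $\mathrm{diam}(S)$ and $L$, so we may shrink $S$ inside the ball geometry as needed.

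The core is an adversarial choice of $(x_\star,S)$ made online. We maintain an index $k$ and the invariant that after $k$ iterations every observed quantity $x_\ell, \nabla f(x_\ell), z_\ell$ lies in $E_k=\mathrm{span}\{e_1,\dots,e_{2k}\}$, and that every quantity revealed so far is consistent with \emph{any} completion of the instance in the remaining coordinates. To handle gradients, we write $x_\star=\sum_i c_i u_i$ with $u_i$ in block $i$ of small norm. The oracle reveals $\nabla f(x_\ell)=L(x_\ell-x_\star)$ only after committing the first $k+1$ blocks of $x_\star$, which leaks at most one new block per step. The one-block-per-step pace is why two coordinates per step suffice and $d=2(T+1)$.

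For the LMO, we will design $S$ so that for any direction $p\in E_{k+1}$ the minimizer of $\langle p,\cdot\rangle$ over $S$ lies in $E_{k+1}$. This holds if $S$ is invariant under reflections $x_j\mapsto -x_j$ of the coordinates in the later blocks: the argmin is then unique by strong convexity, and a symmetric set with a unique argmin places it in the fixed subspace. The unknown block $k+2$ is then the only place where $x_\star$ and the geometry differ across completions.

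The most delicate step is to keep $S$ symmetric while making the final blocks of $\bar x$ large. We therefore take $x_\star$ to carry a large fixed radial component in the committed blocks and a hidden component in the last block, with the hidden direction pointing at a ``cap'' of $S$. After $T$ steps, $x_T\in E_{T}\cup$ one block, and block $T+1$ is still unseen. We then choose the sign or rotation within that block adversarially so that $x_T$ misses the cap. Quantitatively, if $\bar x$ has a component of size $\asymp \mathrm{diam}(S)/(T+1)$ in each of the $T+1$ blocks, we need $\|x_T-\bar x\|\gtrsim \mathrm{diam}(S)/(T+1)$. For a quadratic with the constrained minimizer on the boundary, $f(x_T)-f(\bar x)\ge \frac{L}{2}\|x_T-\bar x\|^2$ for feasible $x_T$, since the first-order optimality term is nonnegative. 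Tracking constants through the cap geometry should produce the $1/528$.

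The main obstacle is reconciling strong convexity of $S$, which forces curved boundaries and couples the blocks, with the LMO zero-chain property. The first attempt will be $S=\{x:\|x-c\|\le R\}\cap\{x:\|x+c\|\le R\}$-type lens intersections aligned with a ``staircase'' vector $\sum_i e_{2i-1}$, exploiting that a ball's LMO output is $c-Rp/\|p\|$, which stays in $\mathrm{span}(c,p)$. This requires the centers to be revealed progressively in the same way as $x_\star$. Infeasible $x_T$ is excluded by hypothesis, and the final complexity statement follows by solving $\frac{L\,\mathrm{diam}(S)^2}{528(T+1)^2}\le\varepsilon$ for $T$.
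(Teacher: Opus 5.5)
Your plan has two gaps, and each one is fatal for the \texttt{FO-LMO} class.

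\textbf{The objective cannot hide anything.} Since $f$ must be both $L$-smooth and $L$-strongly convex, it is exactly $\frac{L}{2}\|x-x_\star\|_2^2$ up to a constant. The very first gradient $\nabla f(x_0)=L(x_0-x_\star)$ therefore gives the algorithm $x_\star=x_0-\nabla f(x_0)/L$ in every block at once. No oracle can ``commit only the first $k+1$ blocks'' before answering, because the gradient is a full vector whose later-block entries are $L((x_\ell)_j-(x_\star)_j)$. So all adversarial freedom must live in the set, and the unconstrained minimizer must not depend on the adversary's choices. The paper takes $x_\star=M\mathbf{1}$ with $M$ independent of the hidden permutation $\pi$. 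Gradients then reveal nothing, and only the constrained minimizer $x_\star^{(\pi)}$ varies.

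\textbf{Reflection symmetry does not give a zero-chain for arbitrary queries.} Symmetry in the later blocks constrains the LMO output only when $p$ vanishes on those blocks. A general \texttt{FO-LMO} may choose $p_t$ as any deterministic function of the history, for instance a vector with full support. For such $p$ symmetry says nothing, and for your ball/lens intersections the output $c-Rp/\|p\|_2$ is nonzero wherever $p$ is. The mechanism also works against itself, even for span methods. If the final $S$ is reflection-symmetric on a block where $x_\star$ vanishes, then by uniqueness of projection $\Pi_S(x_\star)$ vanishes there too, so that block contributes nothing to $\|x_T-\bar x\|_2$. If you break the symmetry with an adversarially oriented cap, you remove the only reason earlier LMO outputs vanished on that block.

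\textbf{What is missing.} You need a set whose LMO is zero on all unrevealed coordinates for \emph{every} query. The paper uses $S_\pi=\{x:\frac12\|x\|_2^2+\sum_i (w)_{\pi(i)}|(x)_i|\le C^2\}$, an intersection of $2^d$ unit balls, whose LMO has a soft-thresholding form. The adversary assigns the smallest free weight to the free coordinate with the largest $|(p_t)_i|$. The gap identity $(w)_{k+2}^2-(w)_{k+1}^2=2C^2$ then shows that any second nonzero free coordinate would already violate feasibility. The bound follows by averaging $\frac12\sum_{j\in U}((x_T)_j-(x_\star^{(\pi)})_j)^2$ over completions of $\pi$ on the $m\ge T+2$ free coordinates. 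The constant $1/528$ is specific to that computation: it uses $\rho\ge 2/d^2$, a variance bound for $\{\sqrt{i}\}$, and $C^2=\frac{3+2\sqrt{2}}{8}\mathrm{diam}(S)^2$. It will not fall out of ``tracking constants through the cap geometry.''
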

While Theorem~\ref{thm:main} applies for any $L,\alpha>0$, it does not allow a free selection of $\mathrm{diam}(S)$. Rather, our constructed hard ``zero-chain'' sets $S$ have $\mathrm{diam}(S) = \Theta(1/(\alpha d))$. {\color{red} Applying this diameter bound and $d=\Theta(T)$, our lower bound in terms of $L$ and $\alpha$ becomes $\Omega(L/(\alpha^2T^4)$, differing from the second term in~\eqref{eq:garber-hazan-rate} by $1/T^2$. While not tight, this limits the potential for linearly convergent methods.}

Note that one cannot arbitrarily select $\alpha$ and $\mathrm{diam}(S)$. They must satisfy, for example, that $\mathrm{diam}(S)\leq 2/\alpha$. 
In the limit where $\mathrm{diam}(S)= 2/\alpha$, the set $S$ is forced to be (up to translation) the ball $B(0,1/\alpha)$. {\color{blue} This can be verified using the fact that every $\alpha$-strongly convex set $S$ equals an intersection of (infinitely many) balls of radius $1/\alpha$~\cite[Theorem 1]{vial1982strong},
\begin{equation*}
    S = \bigcap_i B(c_i, 1/\alpha).
\end{equation*}
Observe that the only ball of radius $1/\alpha$ that contains a pair of points $x,y\in S$ attaining the claimed diameter bound $\|x-y\|=2/\alpha$ is the ball centered at $c=(x+y)/2$. Hence every ball in the above formula must be $B(c,1/\alpha)$ and so $S=B(c,1/\alpha)$.}

Such forced structure enables faster algorithms: {\color{blue} For example, when $\mathrm{diam}(S)= 2/\alpha$ and $S = B(0,1/\alpha)$, an LMO can be used to explicitly compute orthogonal projections onto the feasible region $S$. Then the class of \texttt{FO-LMO} methods includes projected gradient methods, which are known to converge linearly for smooth, strongly convex objectives~\cite[Theorem 3.10]{Bubeck2015}.} The development of hard instances for any selection of $\mathrm{diam}(S) \leq 2/\alpha$ and resulting more nuanced complexity bounds is left as an important future direction.

As a secondary result, we provide a partial generalization to $\beta$-smooth sets. These bounds are meaningful in the regime of only modestly smooth sets, having $\beta=\Omega(1/\sqrt{\varepsilon})$. In this regime, no \texttt{LMO-span} method (see Section~\ref{sec:SC} for a formal definition) can improve past the optimal compact convex set complexity of $\Theta(L\, \mathrm{diam}(S)^2/\varepsilon)$ or our $\alpha$-strongly convex set lower bound of $\Omega(\sqrt{L\, \mathrm{diam}(S)^2/\varepsilon})$. Theorems~\ref{thm:smooth} and~\ref{thm:mixed} formalize these limits on acceleration due to smoothness.

Since our constructions throughout use an $L$-smooth, $L$-strongly convex objective, all of our lower bounds apply against the wider class of problems with smooth, strongly convex objectives having any $0 \leq \mu \leq L$. Hence, our theory highlights a fundamental difficulty of constrained optimization via LMOs: {\it Even on perfectly conditioned objective functions, hard adversarial constraint sets constitute a barrier to linear convergence.} This stands in contrast to gradient methods with an orthogonal projection oracle where convergence is dominated by objective function conditioning.

\paragraph{Outline} Section~\ref{sec:SC} first derives a lower bound via a novel construction of a strongly convex feasible region that is hard for all \texttt{LMO-span} methods. Section~\ref{sec:general} {\color{blue}then provides a hard instance for each} \texttt{FO-LMO} method, proving Theorem~\ref{thm:main}. This construction is the main technical innovation of our work. Section~\ref{sec:smooth} then provides results extending lower bounds to the specialized setting of sets with modest levels of smoothness.

\subsection{Related Work}
Our Theorem~\ref{thm:main} differs from the previously mentioned lower bounding result of~\cite{halbey2026lower} in two aspects, making the result complementary. In terms of algorithmic scope, our bound provides a wider guarantee, establishing a universal lower bound against all \texttt{FO-LMO} methods. In contrast,~\cite{halbey2026lower} provides a hard instance for two standard implementations of Frank-Wolfe, namely those fixing $p_k=\nabla f(x_k)$ and using an exact line search or the short stepsize procedure to select $x_{k+1}$. In terms of problem class scope, our construction requires a nonsmooth feasible region and a large problem dimension $d$ (linear in the number of iterations to be run). Such a high dimensionality assumption (i.e., $d>T$) is classical and widespread in the optimization complexity literature~\cite{nemirovski1983problem}. In contrast,~\cite{halbey2026lower} is able to provide a hard instance using a smooth ball in $d=2$ dimensions. As a result, they provide a stronger illustration of the limitations of the two methods that their theory covers.

As mentioned above, most classical lower bounding results in unconstrained first-order optimization rely on setting the problem dimension larger than the number of iterations to be conducted. This enables ``zero-chain'' arguments where the objective function is designed to reveal only one new coordinate to the given gradient-span algorithm at each iteration. The main technical innovation of our work is the design of a hard strongly convex set where the LMO possesses a similar zero-chain property to these classical hard objective constructions. Such constructions open the possibility to extend proof techniques and insights from existing unconstrained optimization lower bounds to constrained LMO/projection-free settings.

For example, {\color{red} comparing our lower bound in terms of $L$ and $\mathrm{diam(S)}$ with the first term of~\eqref{eq:garber-hazan-rate}}, the two differ by constant factors: our lower bound has a universal constant of $1/528$ and the upper bound has $9/2$. Determining exactly minimax optimal algorithms and hard problem instances is an important future direction. In settings of unconstrained first-order minimization, the Performance Estimation Problem (PEP) techniques pioneered by~\cite{drori2014performance,Interpolation,Interpolation2} have provided such theory. For example, in smooth convex optimization, these facilitated the identification of the Optimized Gradient Method~\cite{kim2016optimized} and an exactly matching lower bound~\cite{drori2017exact}. PEP was extended to cover structured smooth and strongly convex sets by Luner and Grimmer~\cite{luner2024performance}. As a result, future work may leverage PEP to similarly tighten gaps left here.

Another important property of most Frank-Wolfe methods is that their trajectory is independent of the choice of inner product used. This differs from projected-gradient methods, where the choice of inner product and notion of orthogonality for projections affect the algorithm trajectory, making good preconditioning important for practical success. The line of work~\cite{Pena2023,Wirth2025,Wirth2026} provided Frank-Wolfe with ``affine-covariant'' convergence theory, matching the method's affine covariant nature by avoiding notions like smoothness and strong convexity defined in terms of a fixed inner product. Developing lower bounding theory in affine-covariant terms is an interesting future direction.

One can view the oracle model of an LMO as assuming a first-order oracle for the support function{\color{blue}
$$\sigma_S(p) = \sup\{\langle p, x\rangle \mid x\in S\}.$$ 
That is, the subdifferential of $\sigma_S$ at $p$ is the set of maximizers of $\langle p,x\rangle$ over $S$ (so the possible LMO solutions are subgradients of $\sigma_S$ at $-p$).
For sets with $0\in S$, the support function has a dual relationship to the Minkowski gauge
$$\gamma_S(y) = \inf\{ \gamma>0 \mid y/\gamma \in S\}.$$
Namely, denoting the polar of $S$ by $S^\circ = \{p \mid \langle p,x\rangle \leq 1 \ \forall x\in S\}$, $\sigma_S(p) = \gamma_{S^\circ}(p)$. Note that our definition of $\sigma_S$ may be infinite-valued if $S$ is not compact, and $\gamma_S$ may be infinite-valued if $0$ lies on the boundary of $S$.

Hence, dual to LMO methods, one may consider methods assuming gauge oracle access,} previously studied by~\cite{Renegar2016,Grimmer2021-part1,Grimmer2021-part2,Zakaria2022,Lu2023} as an alternative projection-free framework. The works~\cite{liu2023gauges} and~\cite{samakhoana2024scalable} developed accelerated $\mathcal{O}(1/T^2)$ convergence guarantees for gauge methods over smooth sets, complementing the accelerated LMO rates for strongly convex sets. Identifying any structural relationships between the complexity of these dual oracle models and problem settings is another interesting direction.

\section{Lower Bounds for Strongly Convex Sets and Span Methods}\label{sec:SC}
In this section, we develop the core ideas and constructions underlying our lower bounding theory. We do this against a modified family of \texttt{FO-LMO} methods, called \texttt{LMO-span} methods, restricted to select search directions and iterates from fixed spans and convex hulls. Using a ``resisting oracle'', in the next section, our constructions here will extend to lower bounds for any \texttt{FO-LMO} method.

\paragraph{A Family of First-Order Linear Minimization Oracle Span Methods (\texttt{LMO-span})}
For a given problem instance $(f,S)$ and initialization $x_0 \in S$, a \texttt{LMO-span} method generates sequences of search directions $p_k$, linear minimization solutions $z_{k+1}$, and iterates $x_{k+1}$ as follows for $k=0,1,\dots$
\begin{align}
    p_k &\in \operatorname{span}\left\{\{x_\ell-x_0\}_{\ell=1}^k \cup \{z_\ell-x_0\}_{\ell=1}^k \cup \{\nabla f(x_\ell)\}_{\ell=0}^k\right\} \setminus \{0\}, \label{eq:p-def}\\
    z_{k+1} & = \mathtt{LMO}_S(p_k)\in\argmin_{x\in S}\langle p_k, x\rangle, \label{eq:z-def}\\
    x_{k+1} &\in \operatorname{conv}\{x_0, z_1, \dots, z_{k+1}\}. \label{eq:x-def}
\end{align}
No computational restrictions are placed on how $p_k$ and $x_{k+1}$ are computed. So operations like exact line searches and any usage of a memory/bundle of past gradients are allowed within this model.
We also remark that the restriction that $p_k\neq 0$ is for ease of our development and without loss of generality. An adversarial LMO given $p_k=0$ can return any $z_{k+1}\in S$ as a minimizer. In particular, it could return the prior iterate $x_k$, providing the algorithm with no new information.

{\color{blue} Note that above, $p_k,x_k,z_k$ denote vectors in $\mathbb{R}^d$ with the subscripts indexing their place in a sequence. When doing such indexing, we will use indices $k,t,T$. At times, we will need to refer to the coordinates of such a vector. We will denote the coordinates of a vector by $(p)_i$, wrapped in parentheses. We will reserve indices $i,j$ for denoting coordinates.}

The following theorem provides universal lower bounds against any such span method applied to a strongly convex problem in the high-dimensional regime where $d>T$.
\begin{theorem}\label{thm:SC}
For any $d \ge 1$ and $L,\alpha>0$, there exist an $\alpha$-strongly convex set $S$ and an $L$-smooth, $L$-strongly convex function $f$ such that every \texttt{LMO-span} method applied to $(f, S)$ with $x_0 = 0$ has
\begin{equation}\label{eq:finalLB}
f(x_t) - \min_{x\in S}f(x) \geq \frac{2}{5}\frac{(d-t)L\, \mathrm{diam}(S)^2}{(d+2)^3} \qquad \forall t \in \{0, \dots, d-1\}.
\end{equation}
In particular, for any fixed budget $T \ge 1$, there exist $S$ and $f$ {\color{blue}satisfying the same conditions} in dimension $d = 2(T+1)$ such that
\[
f(x_T) - \min_{x\in S}f(x) \geq \frac{1}{20} \frac{L\, \mathrm{diam}(S)^2}{(T+2)^2}.
\]
Consequently, for any $\varepsilon>0$, there exist problem instances where $T \geq \sqrt{\frac{L\, \mathrm{diam}(S)^2}{20\varepsilon}} - 2$ iterations are required for any \texttt{LMO-span} method to reach a suboptimality of $f(x_T) - \min_{x\in S}f(x) \leq \varepsilon$.
\end{theorem}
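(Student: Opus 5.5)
We plan a ``zero-chain'' argument in the style of Nemirovski--Yudin, with the constraint set $S$ (not the objective) carrying the chain structure. Set $R=1/\alpha$. By~\cite[Theorem 1]{vial1982strong}, any finite intersection of balls of radius $R$ is $\alpha$-strongly convex. We would therefore build
\[
S=\bigcap_{j=1}^{m} B(c_j,R)
\]
with centers $c_j$ tied to the coordinate chain $e_1,\dots,e_d$. For instance, each $c_j$ would be a multiple of $-R$ times a unit vector in $\operatorname{span}\{e_j,e_{j+1}\}$, placed so that $0\in S$. The centers are tuned so that $S$ is a thin ``lens'' of diameter $\Theta(1/(\alpha d))$ around the origin. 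The objective is $f(x)=\frac{L}{2}\|x-x_\star\|_2^2$, where $x_\star$ lies well outside $S$ along a direction spreading mass over all coordinates, e.g.\ $x_\star\propto \sum_i e_i$. This $f$ is $L$-smooth and $L$-strongly convex.

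\textbf{Step 1: LMO zero-chain property.} Write $E_k=\operatorname{span}\{e_1,\dots,e_k\}$. We would show that for every nonzero $p\in E_k$, every minimizer $z$ of $\langle p,x\rangle$ over $S$ lies in $E_{k+1}$. By KKT,
\[
p=-\sum_j\lambda_j(z-c_j),\qquad \lambda_j\ge0 \text{ supported on active balls},
\]
so $z=\frac{\sum_j\lambda_j c_j-p}{\sum_j\lambda_j}$ lies in the span of $p$ and the active centers. It then suffices to prove that only centers $c_j$ with $j\le k+1$ can be active. The intended argument: the candidate point built from $p$ and $c_1,\dots,c_{k+1}$ has zero tail coordinates. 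With the geometry chosen appropriately, it lies strictly inside every ball $B(c_j,R)$ with $j>k+1$, and its KKT system is solvable with nonnegative multipliers. Since the minimizer of a nonzero linear functional over a strongly convex set is unique, this candidate is the LMO answer, so no adversarial tie-breaking arises.

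\textbf{Step 2: the chain propagates.} Suppose $x_0=0$. We also need $\nabla f(x_\ell)=L(x_\ell-x_\star)\in E_{\ell+1}$. A pure $x_\star\propto\sum_i e_i$ would leak every coordinate at once, so we would handle the gradient by one of two fixes. Either $x_\star$ is chosen so that its contribution is a fixed $e_1$-type direction (for instance $x_\star=-\rho e_1$ with $\rho$ large, while the constraint geometry spreads the optimal solution across all coordinates), or $x_\star$ is absorbed into the first center. With this in place, induction on $k$ using Step 1 and the span rules~\eqref{eq:p-def}--\eqref{eq:x-def} gives
\[
p_k\in E_{k+1},\qquad z_{k+1},\,x_{k+1}\in E_{k+1}.
\]

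\textbf{Step 3: gap bound.} Let $\bar x=\argmin_S f$. By $L$-strong convexity of $f$ and first-order optimality of $\bar x$ over the convex set $S$,
\[
f(x)-f(\bar x)\ge \tfrac L2\|x-\bar x\|^2 \quad\text{for } x\in S,\qquad\text{so}\qquad f(x_t)-f(\bar x)\ge \tfrac L2\sum_{i>t}(\bar x)_i^2 .
\]
We would then compute $\bar x$ explicitly, or bound its tail coordinates, showing each $|(\bar x)_i|\gtrsim \mathrm{diam}(S)/d$. Together with $\mathrm{diam}(S)=\Theta(R/d)$, this gives~\eqref{eq:finalLB} with the stated constant. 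Finally, substitute $d=2(T+1)$, so $d-T\ge (d+2)/2$, which yields the $\frac{1}{20}$ bound; inverting gives the iteration count.

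The main obstacle is Step 1 jointly with Step 3. The centers must make the LMO leak only one coordinate per call, yet the constrained minimizer must still carry non-negligible mass on every coordinate, with explicit control of $\mathrm{diam}(S)$. We would first try a symmetric staircase of centers and verify activity of balls via explicit KKT multipliers, adjusting the center spacing until the tail-mass and diameter constants close.
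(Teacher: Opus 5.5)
Your outline has the right skeleton: an LMO-driven zero chain, a strongly convex set built from radius-$1/\alpha$ balls, a quadratic objective, and a gap controlled by the tail of the constrained minimizer. It is not yet a proof, though. The set $S$ is never specified, and the three facts that carry the theorem are all deferred to ``the geometry chosen appropriately.'' Those facts are one new coordinate per LMO call, the diameter, and a tail-mass bound for $\bar x$ that yields the constant $\frac25$ and the $(d+2)^3$ scaling.

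The concrete hints you give do not close this. First, the indexing is inconsistent. If $c_{k+1}\in\operatorname{span}\{e_{k+1},e_{k+2}\}$ is active, your KKT formula gives $z\notin E_{k+1}$. Also, Step 1 applied to $p_0\in E_1$ only gives $z_1\in E_2$, so the induction yields $x_t\in E_{t+1}$ and nothing at $t=d-1$.

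More seriously, your Step 2 fix works against Step 3. The projection $\bar x=\mathrm{proj}_S(-\rho e_1)$ satisfies $\bar x=\mathtt{LMO}_S(\bar x+\rho e_1)$. On an $\alpha$-strongly convex set the LMO is $\frac1\alpha$-Lipschitz on unit directions; to see this, add the inequalities $\langle p,y-\mathtt{LMO}_S(p)\rangle\ge\frac\alpha2\|y-\mathtt{LMO}_S(p)\|_2^2$ for two unit directions. Hence $\|\bar x-\mathtt{LMO}_S(e_1)\|_2\le 2\,\mathrm{diam}(S)/(\alpha\rho)$. By your own Step 1, $\mathtt{LMO}_S(e_1)\in E_2$, so taking $\rho$ large drives the tail of $\bar x$ to zero. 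Whether a moderate $\rho$, or ``absorbing $x_\star$ into the first center,'' leaves tail coordinates of the required size is exactly the unproven heart of the matter.

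The missing idea is that the leakage you attribute to $x_\star\propto\mathbf 1$ is harmless under a weaker invariant than $x_t\in E_t$. The paper keeps $x_\star=\nu\mathbf 1$ \emph{on the boundary} of $S$, so $\min_S f=0$ and no projection needs to be analyzed. It then observes that the $x_k$ and $z_k$ vanish beyond coordinate $t$, while an admissible $p_t$, which may include gradients $x_k-\nu\mathbf 1$, merely has a \emph{constant} tail $\sigma$ on coordinates $i\ge t+1$.

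The set is $S=\{x\mid \frac12\|x\|_2^2+\sum_i(w)_i|(x)_i|\le C^2\}$ with strictly increasing weights $(w)_i=C\sqrt{2i}$ and $C=1/\sqrt{d^2+d+2}$. Its LMO is soft thresholding, $(z)_i=-\mathrm{sign}((p)_i)\max(0,|(p)_i|/\lambda-(w)_i)$. Suppose some coordinate $j\ge t+2$ survived. Then coordinate $t+1$, with the same $|\sigma|$ but a smaller threshold, would have $|(z)_{t+1}|>(w)_{t+2}-(w)_{t+1}$. This gives $\frac12((z)_{t+1})^2+(w)_{t+1}|(z)_{t+1}|>\frac12((w)_{t+2}^2-(w)_{t+1}^2)=C^2$, contradicting feasibility.

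The remaining facts are explicit computations:
\begin{itemize}
\item Strong convexity comes from $S=\bigcap_{s\in\{\pm1\}^d}B(-s\circ w,1)$, using $2C^2+\|w\|_2^2=1$. This uses $2^d$ balls covering every sign pattern, not a staircase.
\item The diameter is exactly $2C(2-\sqrt2)$.
\item The gap is at least $\frac{d-t}{2}\nu^2$ with $\nu^2\ge 9C^2/(8(d+2)^3)$.
\end{itemize}
These computations are what produce $\frac25$ and $(d+2)^3$. Without a concrete instance, your claim that Step 3 delivers the bound ``with the stated constant'' is unsupported.
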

{\color{red} This matches the first term in~\eqref{eq:garber-hazan-rate} but lacks a dependence on the curvature $\alpha$. Noting that our hard instance construction has $\mathrm{diam}(S)=\Theta(1/(\alpha d))$, one can rewrite our lower bound as $\Omega(L/(\alpha^2T^4))$, matching the second term up to a factor of $1/T^2$.}

Our proof of this result is developed in {\color{blue}four} parts. {\color{blue} First, we establish by simple rescaling arguments that it suffices to consider only the case of $L=\alpha=1$. Then} Section~\ref{subsec:Construction-SC} constructs our candidate hard problem instance for each dimension and verifies its validity (computing its strong convexity constant and diameter). Next Section~\ref{subsec:zero-chain-SC} establishes a key ``zero-chain'' property of these hard instances, showing that any \texttt{LMO-span} method applied will only discover one new coordinate per iteration. Finally, by leveraging this property, Section~\ref{subsec:proof-SC} proves this section's main result, Theorem~\ref{thm:SC}, showing that no method in $T$ steps can guarantee a suboptimality less than $\Omega(L\, \mathrm{diam}(S)^2/T^2)$. 

{\color{blue}
The following lemma shows that it suffices to fix $L=1$ and $\alpha=1$ throughout. This applies whether one considers \texttt{LMO-span} methods or \texttt{FO-LMO} methods.
\begin{lemma} \label{lem:rescaling}
    Suppose there exists a $1$-smooth, $1$-strongly convex function $f$ and a $1$-strongly convex set $S$ and a constant $r>0$ such that every \texttt{LMO-span} method (or \texttt{FO-LMO} method) has
    $$ f(x_T) - \min_{x\in S}f(x) \geq r\, \mathrm{diam}(S)^2.$$
    Then, for any $L,\alpha>0$, the rescaled instance defined by
    \begin{equation}\label{eq:rescalings}
         \tilde{f}(x) := \frac{L}{\alpha^2}f(\alpha x),\qquad \tilde{S} := \frac{1}{\alpha} S = \{x/\alpha  \mid x \in S\}.
    \end{equation}
    has $\tilde f$ being $L$-smooth and $L$-strongly convex, $\tilde S$ being $\alpha$-strongly convex, and every \texttt{LMO-span} method (or \texttt{FO-LMO} method) satisfies
    $$ \tilde{f}(x_T) - \min_{x\in\tilde{S}}\tilde{f}(x) \geq Lr\, \mathrm{diam}(\tilde{S})^2.$$
\end{lemma}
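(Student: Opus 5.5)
The proof has two parts: the routine rescaling facts, and a simulation argument that transfers the lower bound from $(f,S)$ to $(\tilde f,\tilde S)$.

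\textbf{Rescaling facts.} Write $\phi(x)=\alpha x$. Since $\nabla\tilde f(x)=\frac{L}{\alpha}\nabla f(\alpha x)$, we get
$$\|\nabla\tilde f(x)-\nabla\tilde f(y)\|_2\le \tfrac{L}{\alpha}\,\alpha\|x-y\|_2 = L\|x-y\|_2,$$
so $\tilde f$ is $L$-smooth. Substituting $\alpha x,\alpha y$ into the defining inequality for $1$-strong convexity of $f$ and multiplying by $L/\alpha^2$ shows $\tilde f$ is $L$-strongly convex. For the set, take $\tilde x=x/\alpha$ and $\tilde y=y/\alpha$ in $\tilde S$. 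The $1$-strong convexity of $S$ gives
$$\lambda x+(1-\lambda)y+B\bigl(0,\tfrac{\lambda(1-\lambda)}{2}\|x-y\|_2^2\bigr)\subseteq S.$$
Dividing by $\alpha$ and using $\|x-y\|_2^2=\alpha^2\|\tilde x-\tilde y\|_2^2$, the ball radius becomes $\frac{\lambda(1-\lambda)\alpha}{2}\|\tilde x-\tilde y\|_2^2$. Hence $\tilde S$ is $\alpha$-strongly convex. We also record $\mathrm{diam}(\tilde S)=\mathrm{diam}(S)/\alpha$ and $\min_{\tilde S}\tilde f=\frac{L}{\alpha^2}\min_S f$.

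\textbf{Simulation argument.} Take any method $\tilde{\mathcal M}$ in the class, run on $(\tilde f,\tilde S)$ from $x_0=0$. From it we build a method $\mathcal M$ on $(f,S)$ whose iterates are $x_k=\alpha\tilde x_k$. The oracle responses correspond as follows:
\begin{itemize}
\item $f(x)=\frac{\alpha^2}{L}\tilde f(x/\alpha)$ and $\nabla f(x)=\frac{\alpha}{L}\nabla\tilde f(x/\alpha)$;
\item $\mathtt{LMO}_S(p)=\alpha\,\mathtt{LMO}_{\tilde S}(p)$, because $\argmin_{S}\langle p,\cdot\rangle=\alpha\argmin_{\tilde S}\langle p,\cdot\rangle$ and any selection rule on $\tilde S$ transports to one on $S$.
\end{itemize}
So $\mathcal M$ can recompute every response $\tilde{\mathcal M}$ would have seen by inverting these invertible maps, and then issues the same directions $p_k$ and the iterates $\alpha\tilde x_k$.

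For \texttt{FO-LMO} methods this makes $\mathcal M$ deterministic in its own responses, so $\mathcal M$ is a legitimate \texttt{FO-LMO} method. For \texttt{LMO-span} methods, the spans in \eqref{eq:p-def} and the convex hulls in \eqref{eq:x-def} are invariant under multiplying their generators by the positive scalars $\alpha$ and $\alpha/L$, with $x_0=0$ fixed. Hence $\mathcal M$ is an \texttt{LMO-span} method. Feasibility is preserved in both directions, since $x_T\in S\iff\tilde x_T\in\tilde S$.

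\textbf{Conclusion.} Applying the hypothesis to $\mathcal M$ gives
$$\tilde f(\tilde x_T)-\min_{\tilde S}\tilde f=\tfrac{L}{\alpha^2}\Bigl(f(x_T)-\min_S f\Bigr)\ge \tfrac{L}{\alpha^2}\,r\,\mathrm{diam}(S)^2=Lr\,\mathrm{diam}(\tilde S)^2.$$

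The only step needing care is the simulation. One must check that the adversarial LMO selection on $\tilde S$ corresponds to a valid selection on $S$, and that membership in the algorithm class is preserved. Both follow because every relevant map is a positive scalar multiple and hence a bijection.
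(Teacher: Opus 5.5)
Your proposal is correct and follows essentially the same route as the paper. Both verify the smoothness, strong convexity and diameter scalings directly, then map any method run on $(\tilde f,\tilde S)$ to one on $(f,S)$ via $x_k=\alpha\tilde x_k$, $z_k=\alpha\tilde z_k$, checking that span/convex-hull membership (or determinism) is preserved. The only difference is that the paper rescales the directions to $(L/\alpha)\tilde p_k$ while you keep them unchanged; this is immaterial, since the LMO and the spans are invariant under positive scaling.
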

\begin{proof}
    Observe that the gradient of the rescaled function is given by $\nabla \tilde f(x) = (L/\alpha) \nabla f(\alpha x)$. Then $L$-smoothness follows from $1$-smoothness of $f$ as
    $$ \|\nabla \tilde{f}(x) - \nabla \tilde{f}(y)\| = \frac{L}{\alpha}\|\nabla f(\alpha x) - \nabla f(\alpha y)\| \leq \frac{L}{\alpha}\|\alpha x - \alpha y\| = L\|x-y\|.$$
    Similarly, $L$-strong convexity of $\tilde{f}$ follows from $1$-strong convexity of $f$ as
    \begin{align*}
        \tilde{f}(\lambda x + (1-\lambda)y)&= \frac{L}{\alpha^2} f(\lambda \alpha x + (1-\lambda)\alpha y)\\
        &\leq \frac{L}{\alpha^2}\left(\lambda f(\alpha x) + (1-\lambda)f(\alpha y) - \frac{\lambda(1-\lambda)}{2}\|\alpha y-\alpha x\|_2^2\right)\\
        &=\lambda \tilde{f}(x) + (1-\lambda)\tilde{f}(y) - \frac{\lambda(1-\lambda)L}{2}\|y-x\|_2^2.
    \end{align*}
    The rescaling of the given $1$-strongly convex set $S$ by $1/\alpha$ directly makes $\tilde{S}$ be $\alpha$-strongly convex and have $\mathrm{diam}(\tilde{S}) = \mathrm{diam}(S)/\alpha$.

    Now consider any \texttt{LMO-span} method, generating search directions $\tilde{p}_0,\dots,\tilde{p}_{T-1}$, minimizers $\tilde{z}_1,\dots,\tilde{z}_T$, and iterates $\tilde{x}_0,\dots,\tilde{x}_T$ when applied to $\tilde{f}$ and $\tilde{S}$. Consider the rescaled problem data 
    $p_k = (L/\alpha)\tilde{p}_k,\ z_k=\alpha \tilde{z}_k,\ x_k = \alpha \tilde{x}_k$. Since each $\nabla f(x_k)$ is a positive rescaling of $\nabla \tilde{f}(\tilde{x}_k)$, a direct recursive argument establishes $p_k$ must lie in~\eqref{eq:p-def}, $z_{k+1}$ must be the linear minimization solution for $p_k$ over $S$~\eqref{eq:z-def}, and $x_{k+1}$ satisfies~\eqref{eq:x-def}. So $x_T$ is the final iterate of some \texttt{LMO-span} method applied to $f$ and $S$. Hence $f(x_T) - \min_{x\in S}f(x) \geq r\, \mathrm{diam}(S)^2$. Equivalently, $\tilde{f}(\tilde{x}_T) - \min_{x\in\tilde{S}}\tilde{f}(x) \geq Lr\, \mathrm{diam}(\tilde{S})^2$.

    Likewise, for any \texttt{FO-LMO} method, the rescaled data remains a deterministic function of prior observations. So $x_T$ is the final iterate of some \texttt{FO-LMO} method, leading to the same conclusion.
\end{proof}
}

\subsection{Construction of a Hard Problem Instance}\label{subsec:Construction-SC}

{\color{blue} Recall that we denote the $i$th coordinate of a vector $x\in\mathbb{R}^d$ by $(x)_i$. Then} for any given problem dimension $d$, we construct our hard problem instance as follows:
The feasible region is defined by
\begin{equation} \label{eq:S-simple}
S := \left\{ x \in \mathbb{R}^d \mid \frac{1}{2}\|x\|_2^2 + \sum_{i=1}^d (w)_i |(x)_i| \le C^2 \right\}
\end{equation}
where the normalization constant $C$ and a vector of weights $w$ are defined as
\begin{equation}\label{eq:params}
C := \frac{1}{\sqrt{d^2+d+2}}, \qquad
(w)_i := C\sqrt{2 i} \quad \text{for } i=1,\dots,d.
\end{equation}
Note that $x_0=0$ is feasible, lying in the interior of $S$. The objective function is constructed to have its minimizer $x_\star = \nu \mathbf{1}$ where {\color{blue} $\mathbf{1}$ denotes the all ones vector in $\mathbb{R}^d$ and} $\nu$ is the unique positive root of
\begin{equation}\label{eq:nu-def}
\frac{1}{2} d \nu^2 + \left(\sum_{i=1}^d (w)_i\right)\nu  = C^2.
\end{equation}
This choice guarantees that $x_\star$ lies on the boundary of $S$ and hence is feasible. We then set $f(x):=\frac{1}{2}\|x-x_\star\|_2^2$. {\color{blue} This function is $1$-smooth, $1$-strongly convex, and minimizes at $x_\star$,} ensuring that the minimum of $f$ over $S$ is zero.

In the remainder of this subsection, we verify that $S$ has strong convexity constant $\alpha=1$ in Lemma~\ref{lem:SC} and compute its diameter in Lemma~\ref{lem:diam-SC}. Together, these establish the validity of the considered problem instance defined by $f$ and $S$.

\begin{lemma}[Strong Convexity of $S$] \label{lem:SC}
    The constructed set $S$ in~\eqref{eq:S-simple} is $\alpha=1$-strongly convex.    
\end{lemma}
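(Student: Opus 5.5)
Write $S=\{x \mid g(x)\le C^2\}$ with $g(x)=\frac12\|x\|_2^2+h(x)$ and $h(x)=\sum_i (w)_i|(x)_i|$. Since $w\ge 0$, $h$ is convex and positively homogeneous, and $g$ is $1$-strongly convex. The plan is to show directly that for any $x,y\in S$, $\lambda\in[0,1]$, and $u$ with $\|u\|_2\le r:=\frac{\lambda(1-\lambda)}{2}\|y-x\|_2^2$, the point $m+u$ lies in $S$, where $m=\lambda x+(1-\lambda)y$.

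First I will use strong convexity of $g$. Since $\frac12\|\cdot\|_2^2$ satisfies the parallelogram-type identity exactly, we get
\[
g(m)\le \lambda g(x)+(1-\lambda)g(y)-\tfrac{\lambda(1-\lambda)}{2}\|x-y\|_2^2\le C^2-r .
\]
So the midpoint has slack $r$ in the constraint.

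Next I will bound how much $g$ can increase under the perturbation $u$:
\[
g(m+u)-g(m)=\langle m,u\rangle+\tfrac12\|u\|_2^2+h(m+u)-h(m)\le \|m\|\,\|u\|+\tfrac12\|u\|^2+\|w\|_2\|u\|_2,
\]
using the triangle inequality for the weighted $\ell_1$ term together with Cauchy--Schwarz. It then suffices to show $\|m\|_2+\|w\|_2+\frac12 r\le 1$, since then $g(m+u)\le C^2-r+r=C^2$.

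The main obstacle is checking this quantitative inequality from the specific constants in \eqref{eq:params}. Every $x\in S$ has $\frac12\|x\|^2\le C^2$, so $\|m\|\le\sqrt2 C$, and $r\le \frac18(2\sqrt2C)^2=C^2$. Also $\|w\|_2^2=C^2\sum_i 2i=C^2 d(d+1)$. Hence the needed bound is
\[
\sqrt2\,C+C\sqrt{d(d+1)}+\tfrac12C^2\le 1 \qquad\text{with } C=1/\sqrt{d^2+d+2}.
\]
Writing $a=\sqrt{d(d+1)}$, so that $C=1/\sqrt{a^2+2}$, this becomes $\sqrt2+a+\frac{1}{2\sqrt{a^2+2}}\le\sqrt{a^2+2}$. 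Since $\sqrt{a^2+2}\ge a$ is only slightly larger than $a$, this crude chain fails.

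I therefore expect to need a sharper route: the ball-intersection characterization~\cite{vial1982strong}, or a normal-cone argument. In the latter, $S$ is $1$-strongly convex if at every boundary point the outward normal cone has the form $\{t(x+s)\}$ with $s\in\partial h(x)$, and $S\subseteq B(x-n/\|n\|,1)$ for the unit normal $n/\|n\|$.

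The first thing I will try is to show that for each boundary $x$ and each $s\in\partial h(x)$, the set $S$ lies in the ball centered at $c=x-\frac{x+s}{\|x+s\|}$ of radius $1$. For $y\in S$, I will expand $\|y-c\|^2$ and use $g(y)\le C^2=g(x)$ together with the subgradient inequality $h(y)\ge\langle s,y\rangle$ and the equality $h(x)=\langle s,x\rangle$. This reduces the claim to $\|x+s\|\le 1$.

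That last bound follows from $\|x\|\le\sqrt2C$ and $\|s\|\le\|w\|=C\sqrt{d(d+1)}$, provided $C(\sqrt2+\sqrt{d(d+1)})\le1$, i.e. $(\sqrt2+a)^2\le a^2+2$. This fails as well, so I will refine by bounding $\|x+s\|^2$ directly. Using $\|x\|^2+2h(x)\le 2C^2$, I get
\[
\|x+s\|^2=\|x\|^2+2\langle x,s\rangle+\|s\|^2\le 2C^2+C^2d(d+1)\le1,
\]
where the last step uses $C^2=1/(d^2+d+2)$. This gives $S\subseteq B(c,1)$ at every boundary point, and I conclude $1$-strong convexity via the ball-intersection characterization~\cite[Theorem~1]{vial1982strong}.
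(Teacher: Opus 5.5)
Your final argument is correct, but it follows a different route from the paper; the supporting-ball argument is the one that works, and your two abandoned estimates can simply be dropped.

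\textbf{The paper's route.} The paper never looks at boundary points or normals. Writing $|u|=\max_{s\in\{\pm1\}}su$ coordinatewise, it expresses $S$ as the intersection, over sign vectors $s\in\{\pm1\}^d$, of the sets $\{x \mid \frac12\|x\|_2^2+\sum_i (s)_i(w)_i(x)_i\le C^2\}$. Completing the square shows each of these is exactly the unit ball $\{x\mid \|x+s\circ w\|_2\le 1\}$. This uses the same identity $2C^2+\|w\|_2^2=1$ that closes your bound on $\|x+s\|_2$. Vial's facts that balls are strongly convex and that intersections preserve strong convexity then finish the proof.

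\textbf{Your route and the step to make explicit.} You build a unit supporting ball at each boundary point $x$ and each $s\in\partial h(x)$ from the subgradient inequality. By itself this gives only $S\subseteq\bigcap B(c,1)$. To invoke the ball-intersection characterization you also need equality, i.e., that every point outside $S$ is cut off by one of your balls. This is where the normal-cone form you mention is used, and you should state it explicitly. Under Slater's condition, $N_S(x)=\{t(x+s)\mid t\ge 0,\ s\in\partial h(x)\}$ at boundary $x$. Here $x+s\neq 0$, because $x\neq 0$ and $\langle s,x\rangle=h(x)\ge 0$. So for $z\notin S$ with projection $x$ onto $S$, one has $z-x=t(x+s)$ with $t>0$, hence $\|z-c\|_2=1+t\|x+s\|_2>1$.

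\textbf{Comparison.} The paper's algebraic rewrite gives an exact finite representation of $S$ and avoids the normal-cone and separation machinery entirely. Your approach has the merit of producing the ball centers from the geometry rather than guessing them. The two routes find the same balls: at boundary points with all coordinates nonzero, your center $c=x-(x+s)=-\mathrm{sign}(x)\circ w$ is exactly one of the paper's $2^d$ centers.
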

\begin{proof}
    We prove this by using the fact that strong convexity is preserved under intersections~\cite[Proposition 2]{vial1982strong}. Hence, it suffices to show that $S$ is equal to an intersection of $1$-strongly convex sets. We do this below by reformulating $S$ into the intersection of $2^d$ shifted unit balls $\left\{ x \in \mathbb{R}^d \mid  \left\| x + s \circ w \right\|_2 \le 1 \right\}$ where $s \circ w = ((s)_1 (w)_1, \dots, (s)_d (w)_d)$ for a given sign vector $s \in \{\pm 1\}^d$. Namely, one has that
    \begin{align*}
        S &=\left\{ x \in \mathbb{R}^d \mid \max_{s\in\{\pm 1\}^d}\left\{\frac{1}{2}\|x\|_2^2 + \sum_{i=1}^d (w)_i(s)_i(x)_i\right\} \le C^2 \right\}\\
        &=\bigcap_{s \in \{\pm 1\}^d} \left\{ x \in \mathbb{R}^d \mid \frac{1}{2}\|x\|_2^2 + \sum_{i=1}^d (s)_i (w)_i (x)_i \le C^2 \right\}\\
        &=\bigcap_{s \in \{\pm 1\}^d} \left\{ x \in \mathbb{R}^d \mid \|x\|_2^2 + 2\sum_{i=1}^d (s)_i (w)_i (x)_i + \|w\|_2^2\le 2C^2 + \|w\|_2^2\right\}\\
        &=\bigcap_{s \in \{\pm 1\}^d} \left\{ x \in \mathbb{R}^d \mid \left\| x + s \circ w \right\|_2^2 \le 1 \right\}
    \end{align*}
    where the first equality uses the identity $|u|=\max_{s\in\{\pm 1\}} su$ component-wise, the second rewrites this as an intersection, the third multiplies by two and adds the constant $\|w\|_2^2$ to each inequality, and the fourth completes the square {\color{blue} having $\|w\|_2^2 = \|s \circ w\|_2^2$} and notes that the definition of $C$ and the fact that $\|w\|_2^2 = C^2 (d^2+d)$ ensure that $1 = 2C^2 + \|w\|_2^2$. Since each shifted unit ball is $1$-strongly convex{\color{blue}~\cite[Proposition 1]{vial1982strong}}, so is $S$.
\end{proof}

\begin{lemma}[Exact Diameter of $S$]\label{lem:diam-SC}
The diameter of $S$ is $\mathrm{diam}(S) = 2C(2-\sqrt{2})$.
\end{lemma}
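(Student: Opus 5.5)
Since $S$ is invariant under $x\mapsto -x$ (the defining inequality involves only $\|x\|_2$ and the $|(x)_i|$), the plan is to reduce the diameter to twice the largest norm of a point in $S$. We then compute that radius exactly by a one-dimensional optimization.

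\textbf{Reduction.} Let $R := \max_{x\in S}\|x\|_2$. For any $x,y\in S$ the triangle inequality gives $\|x-y\|_2 \le \|x\|_2+\|y\|_2 \le 2R$. Conversely, if $\bar x\in S$ attains $\|\bar x\|_2 = R$, then $-\bar x\in S$ by symmetry, and $\|\bar x-(-\bar x)\|_2 = 2R$. Hence $\mathrm{diam}(S)=2R$, and it remains to show $R = C(2-\sqrt{2})$.

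\textbf{Computing $R$.} Fix a point $x$ with $\|x\|_2=r$. Since $(w)_i = C\sqrt{2i}$ is increasing in $i$, the smallest weight is $(w)_1 = C\sqrt2$. We will use the bound $\sum_i (w)_i|(x)_i| \ge (w)_1\sum_i|(x)_i| \ge (w)_1\|x\|_2 = C\sqrt2\, r$, where the second step is $\|x\|_1\ge\|x\|_2$. Equality holds for $x = r e_1$. So a point of norm $r$ can lie in $S$ if and only if $\tfrac12 r^2 + C\sqrt2\, r \le C^2$; the ``if'' direction is witnessed by $x=re_1$.

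The left-hand side is increasing in $r\ge 0$, so $R$ is the positive root of $\tfrac12 r^2 + \sqrt2 C r - C^2 = 0$. This root is
$$R = -\sqrt2 C + \sqrt{2C^2+2C^2} = C(2-\sqrt2).$$
Taking $\bar x = R e_1$ and $-\bar x$ as the diameter-attaining pair then gives $\mathrm{diam}(S) = 2C(2-\sqrt2)$.

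\textbf{Main step.} The only nontrivial step is the lower bound $\sum_i (w)_i|(x)_i|\ge (w)_1\|x\|_2$. It shows that concentrating all mass on the lowest-weight coordinate is optimal, and it follows from monotonicity of the weights together with $\|x\|_1\ge\|x\|_2$. Everything else is routine algebra.
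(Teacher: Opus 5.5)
Your proposal is correct and follows essentially the same route as the paper: you reduce to twice the maximal norm by central symmetry, bound $\sum_i (w)_i|(x)_i| \ge (w)_1\|x\|_1 \ge C\sqrt{2}\,\|x\|_2$ with equality on the first coordinate axis, and solve the resulting quadratic to get $C(2-\sqrt{2})$. Your triangle-inequality justification of $\mathrm{diam}(S)=2R$ is slightly more explicit than the paper, which simply asserts this from central symmetry.
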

\begin{proof}
Since $S$ is centrally symmetric, its diameter is $\mathrm{diam}(S) = 2 \max_{x \in S} \|x\|_2$.
Hence we maximize $\|x\|_2$ subject to the defining constraint~\eqref{eq:S-simple}. Since the weights are increasing, one can lower bound the weighted sum $\sum_{i=1}^d (w)_i |(x)_i|$ in terms of this two-norm as
\[
\sum_{i=1}^d (w)_i |(x)_i| \ge (w)_1 \sum_{i=1}^d |(x)_i| = C\sqrt{2} \|x\|_1 \ge C\sqrt{2} \|x\|_2.
\]
Both inequalities above hold with equality if and only if $(x)_i=0$ for all $i\geq 2$.
Applying this bound to the defining constraint of~\eqref{eq:S-simple}, every $x\in S$ must have  $\frac{1}{2}\|x\|_2^2 + C\sqrt{2} \|x\|_2 \le C^2$. Completing the square gives $\frac{1}{2}(\|x\|_2 + C\sqrt{2})^2 - C^2 \le C^2$, which simplifies directly to $(\|x\|_2 + C\sqrt{2})^2 \le 4C^2$. Hence $\|x\|_2 \le C(2-\sqrt{2})$. This upper bound is attained when the support of $x$ is restricted to the first coordinate. The boundary points $(\pm C(2-\sqrt{2}), 0, \dots, 0) \in S$ then attain $\mathrm{diam}(S) = 2C(2-\sqrt{2})$.
\end{proof}

\subsection{A Zero-Chain Property}\label{subsec:zero-chain-SC}
Lemma~\ref{lem:LMO-SC} establishes that linear minimization over the constructed $S$ corresponds to a thresholding operation. This form is essential for making $S$ a hard instance for all LMO-based algorithms. In the subsequent Lemma~\ref{lem:chain-SC}, we present our key technical result: this LMO has a ``zero-chain'' property that prevents an \texttt{LMO-span} algorithm from discovering more than one new nonzero coordinate per iteration.

\begin{lemma}[Exact LMO]\label{lem:LMO-SC}
For any search direction $p \in \mathbb{R}^d \setminus \{0\}$, the linear minimization oracle
$z = \mathtt{LMO}_S(p) \in \argmin_{x\in S} \langle p, x\rangle$
is given coordinatewise by the thresholding operator
\begin{equation}\label{eq:LMO}
(z)_i = -\mathrm{sign}((p)_i) \max\left(0, \frac{|(p)_i|}{\lambda} - (w)_i \right),
\end{equation}
where $\lambda > 0$ is the unique KKT multiplier ensuring $\frac{1}{2}\|z\|_2^2 + \sum_{i=1}^d (w)_i |(z)_i| = C^2$.
\end{lemma}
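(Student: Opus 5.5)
The plan is to treat the LMO as a convex program and read the thresholding form~\eqref{eq:LMO} off its KKT conditions. Write $g(x) := \frac{1}{2}\|x\|_2^2 + \sum_i (w)_i|(x)_i| - C^2$, so that $S=\{x \mid g(x)\le 0\}$ and $g$ is convex. Since $g(0)=-C^2<0$, Slater's condition holds, so $z$ minimizes $\langle p,x\rangle$ over $S$ if and only if there is $\lambda\ge 0$ with
\[
0\in p+\lambda\,\partial g(z),\qquad \lambda g(z)=0,\qquad g(z)\le 0.
\]
First I rule out $\lambda=0$. Since $p\neq 0$, the condition $0\in p$ fails, so $\lambda>0$. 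Complementary slackness then gives $g(z)=0$, which is exactly the stated normalization $\frac{1}{2}\|z\|_2^2+\sum_i (w)_i|(z)_i|=C^2$.

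Next I solve the stationarity condition coordinatewise, using $\partial g(z)=z+w\circ\partial\|\cdot\|_1(z)$. The requirement is $-(p)_i/\lambda \in (z)_i + (w)_i\,\partial|\cdot|((z)_i)$ for each $i$. This is the optimality condition for the scalar problem $\min_u \frac{1}{2}(u+(p)_i/\lambda)^2+(w)_i|u|$, whose unique solution is the soft-threshold $-\mathrm{sign}((p)_i)\max(0,|(p)_i|/\lambda-(w)_i)$. Each coordinate is therefore forced into the form~\eqref{eq:LMO} given $\lambda$. Alternatively, one can note that $z$ minimizes the strongly convex Lagrangian $\langle p,x\rangle+\lambda g(x)$, which is separable across coordinates.

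It remains to show that $\lambda$ exists and is unique, and this is the main obstacle. Define $z(\lambda)$ by~\eqref{eq:LMO} and set $\phi(\lambda):=g(z(\lambda))+C^2=\sum_i\big(\frac{1}{2}(z(\lambda))_i^2+(w)_i|(z(\lambda))_i|\big)$. Each $|(z(\lambda))_i|=\max(0,|(p)_i|/\lambda-(w)_i)$ is continuous and nonincreasing in $\lambda$, and it is strictly decreasing on the interval where it is positive. Hence $\phi$ is continuous and nonincreasing, with $\phi(\lambda)\to 0$ as $\lambda\to\infty$ and $\phi(\lambda)\to\infty$ as $\lambda\to 0^+$, the latter because some $(p)_i\neq 0$. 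The intermediate value theorem gives a $\lambda$ with $\phi(\lambda)=C^2>0$. At any such $\lambda$, $\phi$ is positive, so at least one coordinate lies in its strictly decreasing regime. This makes $\phi$ strictly decreasing near every root, so the level set $\{\phi=C^2\}$ is a single point and $\lambda$ is unique.

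Finally, sufficiency of KKT for this convex problem shows that $z(\lambda)$ is a minimizer. The minimizer is also unique, because every minimizer must satisfy KKT with some multiplier $\lambda'>0$, hence equals $z(\lambda')$ with $\phi(\lambda')=C^2$, which forces $\lambda'=\lambda$. Uniqueness can also be seen from strong convexity of $S$ (Lemma~\ref{lem:SC}): a linear functional with $p\neq0$ has a unique minimizer over a strongly convex set. So $\mathtt{LMO}_S(p)$ is well defined and given by~\eqref{eq:LMO}.
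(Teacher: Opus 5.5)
Your proposal is correct and follows essentially the same route as the paper. Both arguments use Slater's condition and KKT with $\lambda>0$ forced by $p\neq 0$, solve $-(p)_i/\lambda\in(z)_i+(w)_i\,\partial|(z)_i|$ coordinatewise as a soft-threshold, and get uniqueness of the multiplier because the constraint value is continuous, nonincreasing in $\lambda$, and strictly decreasing wherever it is positive. The only difference is bookkeeping: you obtain $\lambda$ from the intermediate value theorem and then invoke KKT sufficiency, whereas the paper gets a minimizer from compactness (unique by strong convexity of $S$) and then applies KKT necessity.
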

\begin{proof}
Let $h(x) = \frac{1}{2}\|x\|_2^2 + \sum_{i=1}^d (w)_i |(x)_i|$. The LMO solves $\min_{x} \langle p, x \rangle$ subject to $h(x) \le C^2$. Since the feasible region is compact, this is attained by some $z$. Since the constraint set is strongly convex and the objective is a non-constant linear function (i.e., $p\neq 0$), this $z$ is the unique minimizer. Further, it must lie on the boundary of the set, making the constraint $h(z)=C^2$ active. Slater's condition holds here as the origin is in the interior ({\color{blue} $h$ is continuous with} $h(0) = 0 < C^2$). Hence, the KKT conditions{\color{blue}~\cite[Corollary 28.2.1]{rockafellar1970convex}} provide a multiplier $\lambda\geq 0$ with $0\in p + \lambda \partial h(z)$. Since $p \neq 0$, $\lambda$ must be positive. The Lagrangian is
\[
\mathcal{L}(x, \lambda) = \langle p, x \rangle + \lambda \left( h(x) - C^2 \right).
\]
We consider the optimality condition for each $(z)_i$ with coordinate $i\in\{1,\dots, d\}$ separately. These require that $0 \in (p)_i + \lambda ((z)_i + (w)_i \partial |(z)_i|)$, giving the key inclusion $-\frac{(p)_i}{\lambda} \in (z)_i + (w)_i \partial |(z)_i|$.
We consider the three possible cases on the value of $(z)_i$: positive, negative, or equal to zero.
\begin{itemize}
    \item \textbf{Case $(z)_i > 0$:} Here $\partial |(z)_i| = \{1\}$, requiring $-\frac{(p)_i}{\lambda} = (z)_i + (w)_i$. Since $(z)_i > 0$ and $(w)_i > 0$, this implies $(p)_i = -\lambda ((w)_i+(z)_i) <0$. Thus $\mathrm{sign}((p)_i) = -1$ and so $(z)_i = \frac{|(p)_i|}{\lambda} - (w)_i$.
    \item \textbf{Case $(z)_i < 0$:} Here $\partial |(z)_i| = \{-1\}$, requiring $-\frac{(p)_i}{\lambda} = (z)_i - (w)_i$. Since $(z)_i < 0$ and $(w)_i > 0$, this implies $(p)_i = \lambda ((w)_i-(z)_i) >0$. Thus $\mathrm{sign}((p)_i) = 1$ and so $(z)_i = -\left(\frac{|(p)_i|}{\lambda} - (w)_i\right)$.
    \item \textbf{Case $(z)_i = 0$:} Here $\partial |(z)_i| = [-1, 1]$, requiring $-\frac{(p)_i}{\lambda} \in [-(w)_i, (w)_i]$. This directly evaluates to the condition $\frac{|(p)_i|}{\lambda} \le (w)_i$.
\end{itemize}
Combining these three cases gives the coordinatewise thresholding form in~\eqref{eq:LMO}.

{\color{blue} Finally, we establish uniqueness of the multiplier $\lambda$. The key observation is that the constraint must be active at the point $z(\lambda)$ above, by complementary slackness.} Since {\color{blue}$H(u) = \frac{1}{2}\|z(u)\|_2^2 + \sum_{i=1}^d (w)_i |(z(u))_i|$} is continuous, {\color{blue} strictly} decreasing in {\color{blue}$u$} whenever {\color{blue}$H(u)$} is positive, and ranges from $\infty$ to $0$, there is a unique {\color{blue}$u > 0$} with the defining constraint active {\color{blue}$H(u) = C^2$}. {\color{blue} Hence $\lambda$ must be exactly this value $u$.}
\end{proof}

\begin{lemma}[Zero-Chain Property]\label{lem:chain-SC}
Suppose at iteration $t \leq  d-2$ {\color{blue}of some \texttt{LMO-span} method, all past iterations $k \le t$ have $x_k$ and $z_k$ with identical values $(\cdot)_i$ across each of their coordinate indices $i \ge t+1$.} Then for any search direction $p_t$ satisfying~\eqref{eq:p-def}, the LMO point $z_{t+1} = \mathtt{LMO}_S(p_t)\in \argmin_{z \in S} \langle p_t, z \rangle$ satisfies
\[
(z_{t+1})_i = 0 \qquad \forall i\in\{ t+2,\dots,d\}.
\]
\end{lemma}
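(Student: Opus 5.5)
The plan is to show two things in turn: the search direction $p_t$ is constant on the tail coordinates $i\ge t+1$, and the thresholding formula of Lemma~\ref{lem:LMO-SC}, combined with the specific weights~\eqref{eq:params}, then allows at most the first tail coordinate $t+1$ of $z_{t+1}$ to be nonzero.

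\textbf{Step 1 (structure of $p_t$).} Call a vector \emph{tail-constant} if $(v)_i$ takes one common value for all $i\ge t+1$. By hypothesis every $x_k$ and $z_k$ with $k\le t$ is tail-constant, and $x_0=0$. The gradients are $\nabla f(x_\ell)=x_\ell-\nu\mathbf{1}$, and both $x_\ell$ and $\nu\mathbf{1}$ are tail-constant, so each gradient is tail-constant as well. Tail-constant vectors form a linear subspace. Hence every element of the span in~\eqref{eq:p-def} is tail-constant, so $(p_t)_i=c$ for all $i\ge t+1$, for some scalar $c$.

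\textbf{Step 2 (apply the LMO formula).} If $c=0$, then~\eqref{eq:LMO} gives $(z_{t+1})_i=0$ for all $i\ge t+1$ and we are done. Otherwise set $a:=|c|/\lambda>0$, where $\lambda>0$ is the multiplier from Lemma~\ref{lem:LMO-SC}. For $i\ge t+1$, the magnitude is $|(z_{t+1})_i|=\max(0,a-(w)_i)$. Since $(w)_i=C\sqrt{2i}$ is strictly increasing, the nonzero tail coordinates form an initial block $t+1,\dots,m$. So it suffices to rule out $a>(w)_{t+2}$.

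\textbf{Step 3 (the key inequality; main obstacle).} This is the step that needs the particular choice of $w$. Suppose for contradiction that $a>(w)_{t+2}$. For each coordinate with $a>(w)_i$, that coordinate's contribution to the active constraint $h(z_{t+1})=C^2$ is
\[
\tfrac12(a-(w)_i)^2+(w)_i(a-(w)_i)=\tfrac12\big(a^2-(w)_i^2\big).
\]
All other contributions to $h$ are nonnegative, so keeping only coordinates $t+1$ and $t+2$ gives
\[
C^2\ \ge\ \tfrac12\big(a^2-(w)_{t+1}^2\big)+\tfrac12\big(a^2-(w)_{t+2}^2\big).
\]
Now use $a^2>(w)_{t+2}^2=2C^2(t+2)$ and $(w)_{t+1}^2=2C^2(t+1)$. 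The first term then exceeds $\tfrac12\cdot 2C^2=C^2$ and the second is positive, so the right-hand side is strictly greater than $C^2$, a contradiction. The essential fact is that consecutive squared weights differ by exactly $2C^2$. This means that activating a second tail coordinate would already overspend the constraint budget $C^2$.

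Therefore $a\le (w)_{t+2}\le (w)_i$ for every $i\ge t+2$, and~\eqref{eq:LMO} gives $(z_{t+1})_i=0$ for all $i\in\{t+2,\dots,d\}$. The condition $t\le d-2$ only ensures that coordinate $t+2$ exists, so that the claim is non-vacuous.
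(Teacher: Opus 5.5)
Your proposal is correct and follows essentially the same route as the paper. You show tail-constancy of $p_t$ from the span structure and apply the thresholding formula of Lemma~\ref{lem:LMO-SC}; if coordinate $t+2$ were active, coordinate $t+1$ alone would overspend the budget $C^2$, since $(w)_{t+2}^2-(w)_{t+1}^2=2C^2$, giving the contradiction. The differences are cosmetic: you use the closed form $\tfrac12(a-(w)_i)^2+(w)_i(a-(w)_i)=\tfrac12(a^2-(w)_i^2)$ where the paper uses monotonicity of $u\mapsto\tfrac12u^2+(w)_{t+1}u$, and you keep an extra positive term from coordinate $t+2$ that isn't needed.
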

\begin{proof}
Noting $x_0=0$, any valid search direction $p_t$ must take the form $p_t = \sum_{k=0}^t \alpha_k \nabla f(x_k) + \sum_{k=1}^t \beta_k x_k + \sum_{k=1}^t \gamma_k z_k$ for some multipliers $\alpha_k,\beta_k,\gamma_k\in\mathbb{R}$. 
Letting $b_k,c_k\in\mathbb{R}$ be the common values of $(x_k)_i = b_k$ and $(z_k)_i=c_k$ for all $i \ge t+1$ and $x_\star = \nu \mathbf{1}$, each past gradient $\nabla f(x_k) = x_k -x_\star$ has $(\nabla f(x_k))_i = b_k-\nu$ for all $i \ge t+1$. Then $p_t$ must have a common value among its latter coordinates, which we denote by
\[
(p_t)_i = \sum_{k=0}^t \alpha_k (b_k-\nu) + \sum_{k=1}^t \beta_k b_k + \sum_{k=1}^t \gamma_k c_k =: \sigma.
\]
If $\sigma = 0$, the LMO~\eqref{eq:LMO} evaluates to $0$ for all remaining coordinates, satisfying the lemma.

Now assume $\sigma \neq 0$, and suppose for contradiction that $(z_{t+1})_j \neq 0$ for some $j \ge t+2$. 
Since $|(p_t)_j| = |\sigma|$, the thresholding operator formula~\eqref{eq:LMO} requires this coordinate to satisfy $\frac{|\sigma|}{\lambda} > (w)_j$. 
Since the weights $(w)_i$ are increasing, $\frac{|\sigma|}{\lambda} > (w)_{t+2} > (w)_{t+1}$. It follows that for coordinate $t+1$, which also has $|(p_t)_{t+1}| = |\sigma|$, the LMO formula~\eqref{eq:LMO} must return a strictly positive absolute value
\[
|(z_{t+1})_{t+1}| = \frac{|\sigma|}{\lambda} - (w)_{t+1} > (w)_{t+2} - (w)_{t+1} > 0.
\]
This strict inequality and the monotonicity of $u\mapsto \frac{1}{2}u^2+(w)_{t+1}u$ on $u>0$ imply that
\begin{align*}
&\frac{1}{2}|(z_{t+1})_{t+1}|^2 + (w)_{t+1} |(z_{t+1})_{t+1}|\\
&> \frac{1}{2}((w)_{t+2} - (w)_{t+1})^2 + (w)_{t+1}((w)_{t+2} - (w)_{t+1}) \\
&= \frac{1}{2} ((w)_{t+2}^2 - 2(w)_{t+2}(w)_{t+1} + (w)_{t+1}^2) + (w)_{t+1}(w)_{t+2} - (w)_{t+1}^2 \\
&= \frac{1}{2} ((w)_{t+2}^2 - (w)_{t+1}^2) = C^2
\end{align*}
where the final equality uses that by definition, adjacent squared weights satisfy $(w)_{t+2}^2-(w)_{t+1}^2=2C^2$. However, this strict inequality contradicts the feasibility of the LMO output $z_{t+1}$ as the sum $\sum_{i=1}^d \left( \frac{1}{2}(z)_i^2 + (w)_i |(z)_i| \right)$ must also strictly exceed $C^2$ since all other summands are nonnegative. Hence, the LMO output must have $(z_{t+1})_j = 0$ for all $j \ge t+2$.
\end{proof}

\subsection{Proof of Theorem~\ref{thm:SC}}\label{subsec:proof-SC}
We first inductively show that for any considered \texttt{LMO-span} method, at each iteration $t$, $(x_t)_i = 0$ for all coordinates $i \ge t+1$ and $(z_{t+1})_i = 0$ for all coordinates $i\geq t+2$.
When $t=0$, $x_0 = 0$ and so $(x_0)_i=0$ for $i\ge 1$. Applying Lemma~\ref{lem:chain-SC} then gives the needed property for $z_1$. Now, suppose {\color{blue} for the sake of induction that at some iteration $t>0$,} all $k\leq t-1$ have $x_k$ supported only on its first $k$ coordinates and $z_{k+1}$ supported only on its first $k+1$ coordinates. Since $x_{t} \in \operatorname{conv}\{0, z_1, \dots, z_{t}\}$ and each of these LMO solutions is zero on all coordinates greater than $t$, it follows that $(x_{t})_i = 0$ for $i \ge t+1$. Then applying Lemma~\ref{lem:chain-SC} restricts the support of $z_{t+1}$, guaranteeing that this returned LMO point has $(z_{t+1})_i = 0$ for all $i \ge t+2$, completing the induction.

From this, we can conclude that for any iteration $t \le d-1$, the suboptimality is lower bounded because the remaining $d-t$ coordinates of $x_t$ are all forced to equal zero:
\begin{equation}\label{eq:tailobj}
f(x_t) - f(x_\star) = \frac{1}{2} \|x_t - x_\star\|_2^2 \ge \frac{1}{2} \sum_{i=t+1}^d (0 - \nu)^2 = \frac{d-t}{2} \nu^2.
\end{equation}
Therefore, to establish the theorem, it suffices to provide a lower bound on $\nu^2$.

Let $W = \sum_{i=1}^d (w)_i$. Applying the quadratic formula to~\eqref{eq:nu-def} and rationalizing the numerator provides the following expression for $\nu$ of
\begin{equation}\label{eq:nu-quad-form}
\nu = \frac{-W + \sqrt{W^2 + 2 d C^2}}{d} = \frac{2 C^2}{W + \sqrt{W^2 + 2 d C^2}}.
\end{equation}
The summation $W$ is upper bounded by $\frac{2\sqrt{2}}{3}C(d+1)^{3/2}$ via the following integral upper bound
\[
\sum_{i=1}^d \sqrt{i} \le \int_0^d \sqrt{x+1} \, dx = \left[ \frac{2}{3}(x+1)^{3/2} \right]_0^d = \frac{2}{3}\left( (d+1)^{3/2} - 1 \right) < \frac{2}{3}(d+1)^{3/2}.
\]
Since the formula~\eqref{eq:nu-quad-form} for $\nu$ is decreasing in $W$, we can lower bound $\nu^2$ by
$$ \nu^2 \geq \frac{(2C^2)^2}{C^2\left(\frac{2\sqrt{2}}{3}(d+1)^{3/2} + \sqrt{\frac{8}{9}(d+1)^3 + 2d}\right)^2} \geq \frac{9 C^2}{8(d+2)^3}$$
where the final step bounds $\left(\frac{2\sqrt{2}}{3}(d+1)^{3/2} + \sqrt{\frac{8}{9}(d+1)^3 + 2d}\right)^2 \leq \frac{32}{9}(d+2)^{3}$, which is valid\footnote{To verify this algebraic bound, let $x=\frac{8}{9}(d+1)^3$. Then the left-hand side equals $(\sqrt{x} + \sqrt{x+2d})^2=2x+2d+2\sqrt{x^2+2dx}$. Since $\sqrt{x^2+2dx} \leq x+d$, this is bounded above by $\frac{32}{9}(d+1)^3 + 4d$. For $d\geq 1$, this can be further bounded above by $\frac{32}{9}(d+2)^3$, giving the final simplified bound.} for all $d\geq 1$. Finally, we recall from Lemma~\ref{lem:diam-SC} that the diameter of $S$ is $2C(2-\sqrt{2})$. Rearranging, we have that $C^2 = \frac{3+2\sqrt{2}}{8} \mathrm{diam}(S)^2$. Hence by~\eqref{eq:tailobj}, the suboptimality after $t$ iterations of any \texttt{LMO-span} is bounded by
\[
  f(x_t)-f(x_\star) \geq \frac{d-t}{2}\ \frac{9(3+2\sqrt{2})\mathrm{diam}(S)^2}{64(d+2)^3} \geq \frac{2}{5}\frac{(d-t)\mathrm{diam}(S)^2}{(d+2)^3}
\]
with the last inequality reducing the absolute constant to a simpler lower bound, $\frac{9(3+2\sqrt{2})}{128}\geq \frac{2}{5}$.

For a fixed budget of iterations $T \ge 1$, fix $d=2(T+1)$ in the above construction. Then our suboptimality lower bound at $t=T$ becomes
\[
    f(x_T) - f(x_\star) \geq \frac{2}{5}\frac{(T+2)\ \mathrm{diam}(S)^2}{(2T+4)^3} = \frac{1}{20}\frac{\mathrm{diam}(S)^2}{(T+2)^2}.
\]

\section{A Resisting Oracle Extension to Deterministic FO-LMO Methods}\label{sec:general}

The lower bound provided in Theorem~\ref{thm:SC} establishes that no \texttt{LMO-span} method can guarantee suboptimality less than $\Omega(L\, \mathrm{diam}(S)^2/T^2)$. However, if the span restriction is relaxed, one can design a method capable of exactly solving any single fixed hard problem. Trivially, one could consider the algorithm that always returns $\nu\mathbf{1}$ for every problem instance. Although often not an effective algorithm, this would exactly solve our previously proposed hard instance. {\color{blue} So we require a family of hard instances.} 

To resolve this, Section~\ref{subsec:permuted-problem} {\color{blue}provides an adversarial process for constructing variants} of our previous hard problem. Section~\ref{subsec:general-zero-chain} derives a ``zero-chain'' property for these adversarial LMOs, resisting any \texttt{FO-LMO} method. Finally, Section~\ref{subsec:general-thm} {\color{blue} shows that for any \texttt{FO-LMO}, our resisting construction provides a tailored hard instance,} proving Theorem~\ref{thm:main}. {\color{blue} Again, it suffices to consider $1$-smooth, $1$-strongly convex functions and $1$-strongly convex sets by Lemma~\ref{lem:rescaling}.}

\subsection{A Permuted Family of Hard Problem Instances} \label{subsec:permuted-problem}
Let $\mathcal{P}_d$ denote the symmetric group of all permutations on $\{1, \dots, d\}$. Given a permutation $\pi \in \mathcal{P}_d$, we define the permuted constraint set
\begin{equation} \label{eq:S-perm}
S_\pi := \left\{ x \in \mathbb{R}^d \mid \frac{1}{2}\|x\|_2^2 + \sum_{i=1}^d (w)_{\pi(i)} |(x)_i| \le C^2 \right\},
\end{equation}
where $C$ and $(w)_i = C\sqrt{2i}$ are defined identically to~\eqref{eq:params}. Since $S_\pi$ is a coordinate permutation of the original set $S$, it retains the exact same diameter and $\alpha=1$ strong convexity as $S$. {\color{blue} Likewise, Lemma~\ref{lem:LMO-SC} under this permutation ensures, for any search direction $p \in \mathbb{R}^d \setminus \{0\}$, the linear minimization oracle solution $z$ is given by
\begin{equation} \label{eq:LMO-permuted-solution}
(z)_i = -\mathrm{sign}((p)_i) \max\left(0, \frac{|(p)_i|}{\lambda} - (w)_{\pi(i)} \right),
\end{equation}
where $\lambda > 0$ is the unique KKT multiplier ensuring $\frac{1}{2}\|z\|_2^2 + \sum_{i=1}^d (w)_{\pi(i)} |(z)_i| = C^2$.}

For any $d\geq 3$,\footnote{{\color{blue}Note that restricting our attention to settings with $d\geq 3$ is sufficient for our aim of proving Theorem~\ref{thm:main}. There we assume $T\geq 1$, so $d=2(T+1)\geq 4$.}} we define the $1$-smooth, $1$-strongly convex objective function as 
\begin{equation} \label{eq:f-perm}
f(x) := \frac{1}{2}\|x - M \mathbf{1}\|_2^2,
\end{equation}
where $M := \frac{\rho}{1-\rho} (w)_d$ with $\rho \in (0, 1)$ as the unique scalar satisfying
\begin{equation} \label{eq:rho-def}
\sum_{i=1}^d \left[ \frac{1}{2}\rho^2 ((w)_d - (w)_i)^2 + \rho (w)_i ((w)_d - (w)_i) \right] = C^2.
\end{equation}
Existence and uniqueness follow since the above polynomial is increasing for $\rho>0$, evaluating to $0$ at $\rho=0$, and $\sum_{i=1}^d \frac{1}{2}((w)_d^2 - (w)_i^2) = C^2 \frac{d(d-1)}{2} > C^2$ at $\rho=1$ since $d \ge 3$. 
The following lemma computes the true constrained optimum $x_\star^{(\pi)} = \argmin_{x \in S_\pi} f(x)$ and provides a lower bound on $\rho$.

\begin{lemma}[The Permuted Problem Minimizer] \label{lem:perm-optimum}
For any $\pi \in \mathcal{P}_d$, the optimal solution $x_\star^{(\pi)}$ to $\min_{x \in S_\pi} f(x)$ is unique and given coordinatewise by
\[
(x_\star^{(\pi)})_i = \rho \left( (w)_d - (w)_{\pi(i)} \right) \quad \forall i \in \{1, \dots, d\}.
\]
Furthermore, the scalar $\rho\in (0,1)$ is bounded below by $\rho \ge 2/d^2$.
\end{lemma}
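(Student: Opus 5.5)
Since $f$ is $1$-strongly convex and $S_\pi$ is compact and convex, the constrained minimizer exists and is unique. It therefore suffices to show that the candidate $\bar x$ with $(\bar x)_i = \rho((w)_d-(w)_{\pi(i)})$ satisfies the KKT conditions for $\min f$ subject to $h_\pi(x)\le C^2$, where $h_\pi(x)=\frac12\|x\|_2^2+\sum_i (w)_{\pi(i)}|(x)_i|$. Slater's condition holds at the origin, so KKT is sufficient for this convex problem.

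First, feasibility and activeness. Every coordinate of $\bar x$ is nonnegative, so $h_\pi(\bar x)=\sum_i[\frac12\rho^2((w)_d-(w)_{\pi(i)})^2+\rho(w)_{\pi(i)}((w)_d-(w)_{\pi(i)})]$. Reindexing by $j=\pi(i)$ turns this into exactly the left-hand side of \eqref{eq:rho-def}, which equals $C^2$ by the definition of $\rho$.

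Second, stationarity. I look for a multiplier $\lambda>0$ with $0\in \bar x - M\mathbf 1+\lambda(\bar x+ w_\pi\circ g)$, where $g_i\in\partial|(\bar x)_i|$. I take $\lambda=\rho/(1-\rho)$, chosen so that $M=\lambda (w)_d$.
\begin{itemize}
\item For coordinates with $\pi(i)<d$, $(\bar x)_i>0$, so $g_i=1$. The condition becomes $(1+\lambda)(\bar x)_i=M-\lambda(w)_{\pi(i)}=\lambda((w)_d-(w)_{\pi(i)})$. Since $\lambda/(1+\lambda)=\rho$, this holds exactly.
\item For the coordinate with $\pi(i)=d$, $(\bar x)_i=0$. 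We need $M\in\lambda(w)_d[-1,1]$, and this holds with $g_i=1$, again because $M=\lambda(w)_d$.
\end{itemize}
Hence $\bar x=x_\star^{(\pi)}$.

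Finally, the bound $\rho\ge 2/d^2$. Write $a_i=(w)_d-(w)_i\ge0$ and $\phi(\rho)=\sum_i[\frac12\rho^2a_i^2+\rho(w)_ia_i]$. For $\rho\in(0,1)$ we have $\rho^2\le\rho$, so $C^2=\phi(\rho)\le \rho\sum_i(\frac12a_i^2+(w)_ia_i)=\rho\sum_i\frac12((w)_d^2-(w)_i^2)=\rho C^2 d(d-1)/2$. The last equality is the computation $\sum_i\frac12((w)_d^2-(w)_i^2)=C^2\sum_i(d-i)$. This gives $\rho\ge 2/(d(d-1))\ge 2/d^2$.

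The only delicate step is picking $\lambda$ so that the single zero coordinate sits exactly at the subdifferential endpoint. The identity $M=\lambda(w)_d$ handles that, so I expect no real obstacle; the rest is bookkeeping of the permutation reindexing.
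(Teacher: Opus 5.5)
Your proof is correct and follows essentially the same route as the paper. Both verify the KKT conditions at the candidate point with multiplier $\rho/(1-\rho)$, chosen so that $M$ equals the multiplier times $(w)_d$ and the zero coordinate sits at the subdifferential endpoint. Both obtain feasibility by substituting into \eqref{eq:rho-def}, and both get $\rho\ge 2/(d(d-1))$ from $\rho^2\le\rho$ together with the identity $\frac{1}{2}((w)_d-(w)_i)^2+(w)_i((w)_d-(w)_i)=\frac{1}{2}((w)_d^2-(w)_i^2)$.

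You also argue uniqueness explicitly from the strong convexity of $f$, which the paper leaves implicit. One small point: sufficiency of KKT for a convex problem does not need Slater's condition, which only guarantees that a multiplier exists.
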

\begin{proof}
{\color{blue} Note that for any permutation, this minimization problem is convex and possesses a Slater point (the origin). So it suffices to } verify optimality of $x_\star^{(\pi)}$ by showing {\color{blue} that the first-order optimality condition for minimizing $f(x)$ over $h(x) = \frac{1}{2}\|x\|_2^2 + \sum_{i=1}^d (w)_{\pi(i)} |(x)_i| \le C^2$ holds. So we will show existence of} some subgradient $\zeta \in \partial ( \sum_{i=1}^d (w)_{\pi(i)} |(\cdot)_i| )(x_\star^{(\pi)})$ and {\color{blue} Lagrange} multiplier $\gamma\geq 0$ such that
$$ x_\star^{(\pi)} - M \mathbf{1} + \gamma (x_\star^{(\pi)} + \zeta) = 0. $$
In particular, we set $\gamma = \frac{\rho}{1-\rho}$ {\color{blue} and verify this condition coordinatewise}.
Since $(w)_d \ge (w)_{\pi(i)}$, note that $(x_\star^{(\pi)})_i \ge 0$. {\color{blue}First consider any $i$ with} $(w)_{\pi(i)} < (w)_d$. In this case, $(x_\star^{(\pi)})_i > 0$ and we can set the required subgradient as $(\zeta)_i = (w)_{\pi(i)}$. Then the {\color{blue}$i$th coordinate's} KKT condition holds since
{\color{blue}
\begin{align*}
    &\left(x_\star^{(\pi)} - M \mathbf{1} + \gamma (x_\star^{(\pi)} + \zeta)\right)_i\\
    &=\rho((w)_d - (w)_{\pi(i)}) - M + \frac{\rho}{1-\rho}(\rho((w)_d - (w)_{\pi(i)}) + (w)_{\pi(i)})\\
    &= \left(\left(\rho + \frac{\rho^2}{1-\rho}\right)(w)_d - M\right)-\left(\rho + \frac{\rho^2}{1-\rho} - \frac{\rho}{1-\rho}\right)(w)_{\pi(i)}=0
\end{align*}
where the first equality applies our choice of $x_\star^{(\pi)}$ and $\zeta$, the second rearranges terms, and the third substitutes $\rho + \frac{\rho^2}{1-\rho} = \gamma = \frac{\rho}{1-\rho}$ and $M=\gamma (w)_d$.}

For the unique component $i$ where $(w)_{\pi(i)} = (w)_d$, we have $(x_\star^{(\pi)})_i = 0$. The {\color{blue} associated $i$th} KKT condition requires $-M + \gamma (\zeta)_i = 0$, implying $(\zeta)_i = M/\gamma = (w)_d$. Since $(w)_d \in [-(w)_d, (w)_d]$, this is a valid subgradient. Finally, substituting $(x_\star^{(\pi)})_i$ into the defining boundary constraint of $S_\pi$ yields the polynomial equation~\eqref{eq:rho-def}, confirming feasibility.

Finally, we lower bound $\rho$. Observe that for each coordinate $i$, one has
\begin{align*}
&\rho ((w)_d - (w)_i) \left[ \frac{1}{2}\rho ((w)_d - (w)_i) + (w)_i \right]\\
&\le \rho ((w)_d - (w)_i) \left[ \frac{1}{2} ((w)_d - (w)_i) + (w)_i \right] = \frac{1}{2}\rho ((w)_d^2 - (w)_i^2).
\end{align*}
Summing over $i$ yields $C^2 \le \frac{1}{2}\rho \sum_{i=1}^d ((w)_d^2 - (w)_i^2) = \frac{1}{2}\rho C^2 d(d-1)$, and so $\rho \geq \frac{2}{d(d-1)} \ge \frac{2}{d^2}$.
\end{proof}

\subsection{A Generalized Zero-Chain Property} \label{subsec:general-zero-chain}
Any first-order query at an iterate $x_k$ yields a gradient $\nabla f(x_k) = x_k - M \mathbf{1}$. Since $M$ is independent of the choice of permutation $\pi$, this gradient for any $x_k$ reveals no information about $\pi$. Thus \texttt{FO-LMO} algorithms must use the LMO to reveal information about $x^{(\pi)}_\star$.
We formalize the information revealed by the LMO via an adversarial ``resisting oracle'' that selects the permutation $\pi$ at runtime. The oracle maintains a set $K_t \subseteq \{1, \dots, d\}$ of assigned coordinates, initially empty.

At each iteration, when the algorithm queries the LMO with an arbitrary vector $p_t \neq 0$, the oracle selects an unassigned coordinate attaining the largest query magnitude
\[
i_\star \in \operatorname{argmax}_{i \notin K_t} |(p_t)_i|.
\]
The adversary assigns this coordinate the smallest available weight, setting $\pi(i_\star) = |K_t| + 1$ and $K_{t+1}=K_t\cup\{i_\star\}$.
{\color{blue} Lemma~\ref{lem:resisting-oracle} below shows that, regardless of how the remainder of $\pi$ is assigned, the LMO point~\eqref{eq:LMO-permuted-solution} remains the same---assigning $(z_{t+1})_{i}=0$ for every unassigned index.}
Alternatively, if given $p_t=0$, we define the oracle output as $z_{t+1}=0$, which is feasible for all $S_\pi$ and reveals no new information.

\begin{lemma}[Generalized Zero-Chain Property]\label{lem:resisting-oracle}
Under the resisting oracle's assignment, for any $p_t$, the returned LMO point $z_{t+1} = \mathtt{LMO}_{S_\pi}(p_t)$ has $(z_{t+1})_j = 0$ for all unassigned coordinates $j \notin K_{t+1}$, regardless of how the remainder of $\pi$ is eventually assigned.
\end{lemma}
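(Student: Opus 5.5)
We mirror the contradiction argument of Lemma~\ref{lem:chain-SC}, now using the adversary's choice of $i_\star$ instead of the equal-tail structure of $p_t$. If $p_t=0$ the oracle returns $z_{t+1}=0$ and there is nothing to prove. So fix $p_t\neq 0$. Let $n=|K_t|$, so that after the assignment $\pi(i_\star)=n+1$ and $K_{t+1}=K_t\cup\{i_\star\}$. Fix any completion of $\pi$ to a full permutation. Every unassigned index $j\notin K_{t+1}$ then receives a weight $(w)_{\pi(j)}$ with $\pi(j)\geq n+2$, because the values $1,\dots,n+1$ are already used by $K_{t+1}$. Since the weights are increasing, $(w)_{\pi(j)}\geq (w)_{n+2}$. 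We will show that the LMO formula \eqref{eq:LMO-permuted-solution} forces $(z_{t+1})_j=0$ for every such completion.

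Suppose for contradiction that $(z_{t+1})_j\neq 0$ for some $j\notin K_{t+1}$. By \eqref{eq:LMO-permuted-solution}, this requires $|(p_t)_j|/\lambda > (w)_{\pi(j)} \geq (w)_{n+2}$. Because $i_\star$ attains the maximum of $|(p_t)_i|$ over unassigned $i$, and $j$ was unassigned at the time of the query, we have $|(p_t)_{i_\star}|\geq |(p_t)_j|$. Hence $|(p_t)_{i_\star}|/\lambda > (w)_{n+2}$, and the formula gives
\[
|(z_{t+1})_{i_\star}| = \frac{|(p_t)_{i_\star}|}{\lambda} - (w)_{n+1} > (w)_{n+2}-(w)_{n+1} > 0.
\]
Coordinate $i_\star$ carries weight $(w)_{n+1}$, and $u\mapsto \frac12 u^2 + (w)_{n+1}u$ is increasing on $u>0$. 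Exactly as in Lemma~\ref{lem:chain-SC}, its contribution to the constraint is therefore strictly larger than
\[
\tfrac12((w)_{n+2}-(w)_{n+1})^2 + (w)_{n+1}((w)_{n+2}-(w)_{n+1}) = \tfrac12((w)_{n+2}^2-(w)_{n+1}^2) = C^2.
\]
All other summands in $\frac12\|z\|_2^2+\sum_i (w)_{\pi(i)}|(z)_i|$ are nonnegative, so $z_{t+1}\notin S_\pi$, which is a contradiction. Thus $(z_{t+1})_j=0$ for all $j\notin K_{t+1}$.

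It remains to see that $z_{t+1}$ itself does not depend on how $\pi$ is completed. Define $\tilde z$ by applying the thresholding formula only on the coordinates in $K_{t+1}$, whose weights are already fixed, and setting it to zero elsewhere. Choose $\lambda$ so that the constraint restricted to $K_{t+1}$ is active; the monotonicity argument from Lemma~\ref{lem:LMO-SC} makes this $\lambda$ unique. For any completion, the argument above shows that the true LMO solution has zero unassigned coordinates. It therefore coincides with $\tilde z$ through the uniqueness of the multiplier in Lemma~\ref{lem:LMO-SC}, since the unassigned summands vanish in the active-constraint equation. Because the LMO minimizer is unique by strong convexity, $z_{t+1}=\tilde z$ for every completion.

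The one delicate step is the inequality $|(p_t)_{i_\star}|\geq|(p_t)_j|$. It holds only because the comparison is over indices unassigned at query time. Coordinates already in $K_t$ may have larger magnitude, but they carry small weights and are irrelevant to the claim. The remaining work is the case where $n+2>d$, which cannot occur while unassigned coordinates outside $K_{t+1}$ remain, so the index $n+2$ is always valid.
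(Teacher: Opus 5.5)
Your proof is correct and follows the paper's argument essentially step for step: the contradiction via maximality of $|(p_t)_{i_\star}|$ over unassigned indices, the monotone assignment giving $(w)_{\pi(j)} \ge (w)_{|K_t|+2}$, and the coordinate-$i_\star$ contribution exceeding $\tfrac{1}{2}((w)_{|K_t|+2}^2-(w)_{|K_t|+1}^2)=C^2$. Your extra paragraph shows that $z_{t+1}$ itself is independent of the completion, via uniqueness of the multiplier on the restricted constraint, and it is a sound addition. The paper only asserts this independence in the text before the lemma. Your extra step implicitly uses that $p_t$ restricted to $K_{t+1}$ is nonzero, which holds because $|(p_t)_{i_\star}|$ dominates every unassigned entry.
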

\begin{proof}
This result is immediate if $p_t=0$ as $z_{t+1}=0$.
As a result, we can assume $p_t\neq 0$ and let $\lambda > 0$ be the KKT multiplier for the LMO query. Assume there exists some $j \notin K_{t+1}$ with $(z_{t+1})_j \neq 0$. From the coordinatewise thresholding formula derived in~\eqref{eq:LMO}, this requires $\frac{|(p_t)_j|}{\lambda} > (w)_{\pi(j)}$.
Since $i_\star$ was chosen as a maximizer of the search direction's magnitude among all unassigned coordinates, we have $|(p_t)_{i_\star}| \ge |(p_t)_j|$. Furthermore, the oracle assigns weights monotonically. So any unassigned coordinate $j$ will eventually receive a weight $(w)_{\pi(j)} \ge (w)_{|K_t|+2}$. Thus, we have
\[
\frac{|(p_t)_{i_\star}|}{\lambda} \ge \frac{|(p_t)_j|}{\lambda} > (w)_{\pi(j)} \ge (w)_{|K_t|+2}.
\]
It follows that the LMO output $z_{t+1}$ at coordinate $i_\star$ is nonzero and lower bounded absolutely by
\[
|(z_{t+1})_{i_\star}| = \frac{|(p_t)_{i_\star}|}{\lambda} - (w)_{|K_t|+1} > (w)_{|K_t|+2} - (w)_{|K_t|+1} > 0.
\]
Since the function $u \mapsto \frac{1}{2}u^2 + (w)_{|K_t|+1}u$ is increasing for $u > 0$, we have that
\[
\frac{1}{2}(z_{t+1})_{i_\star}^2 + (w)_{|K_t|+1} |(z_{t+1})_{i_\star}| > \frac{1}{2}((w)_{|K_t|+2}^2 - (w)_{|K_t|+1}^2) = C^2,
\]
where the last equality follows from the definition $(w)_k^2 = 2C^2 k$.
However, this violates the constraint $\sum_{i=1}^d (\frac{1}{2}(z)_i^2 + (w)_{\pi(i)}|(z)_i|) \le C^2$ since all other terms in the sum are nonnegative. From this contradiction, we conclude that $z_{t+1}$ is zero on all remaining coordinates, regardless of how the permutation $\pi$ is completed.
\end{proof}

\subsection{Proof of Theorem~\ref{thm:main}} \label{subsec:general-thm}
By Lemma~\ref{lem:resisting-oracle}, our resisting LMO reveals at most one coordinate of the permutation $\pi$ at each iteration. After $T$ iterations, the set of assigned coordinates $K_T$ has size at most $T$. Let $U = \{1, \dots, d\} \setminus K_T$ denote the remaining coordinates, with $m = |U| \ge d - T$.

Any considered \texttt{FO-LMO} method must set $x_T$ deterministically as a function of the history of oracle responses. Crucially, while the algorithm must fix $(x_T)_i$ for $i \in U$, the resisting oracle remains free to select any completion of the permutation $\pi$ over $U$. Let $\mathcal{P}_U\subseteq \mathcal{P}_d$ denote all such permutations. If some completion $\pi\in\mathcal{P}_U$ has $x_T\not\in S_\pi$, then the adversarial oracle can select that $\pi$, making $x_T$ infeasible and the theorem hold trivially. Hence, we can assume for all $\pi\in \mathcal{P}_U$ that $x_T\in S_\pi$. Note that the optimality of $x^{(\pi)}_\star$ guarantees $\langle \nabla f(x^{(\pi)}_\star), x_T - x^{(\pi)}_\star\rangle\geq 0$. Since $x_T\in S_\pi$, the suboptimality for any $\pi\in\mathcal{P}_U$ is then bounded by
{\color{blue}
\begin{align}
f(x_T) - f(x_\star^{(\pi)})
&= \left\langle \nabla f(x_\star^{(\pi)}), x_T - x_\star^{(\pi)} \right\rangle
+ \frac{1}{2}\|x_T - x_\star^{(\pi)}\|_2^2 \nonumber \\
&\geq \frac{1}{2}\|x_T - x_\star^{(\pi)}\|_2^2 \nonumber \\
&\geq \frac{1}{2} \sum_{j\in U} \left( (x_T)_{j} - (x^{(\pi)}_\star)_{j} \right)^2 . \label{eq:sc-lower-bound}
\end{align}
}
Let $\pi_\star \in \mathcal{P}_U$ denote the permutation choice maximizing this lower bound. {\color{blue} Recall that the variance of $m$ scalars $X_i$ is defined as their average deviation squared from their mean. We denote this as follows, noting a convenient reformulation as well,
$$\mathrm{Var}(X) = \frac{1}{m}\sum_{i}\left(X_i- \frac{1}{m}\sum_j X_j\right)^2 = \frac{1}{2m^2}\sum_{i,j}(X_i-X_j)^2. $$
}
Then, we have
\begin{align*}
    f(x_T) - f(x_\star^{(\pi_\star)}) &\geq \frac{1}{2} \sum_{j\in U} \left( (x_T)_{j} - (x^{(\pi_\star)}_\star)_{j} \right)^2\\
    &\geq \frac{1}{|\mathcal{P}_U|}\sum_{\pi\in\mathcal{P}_U} \left(\frac{1}{2}\sum_{j\in U} \left( (x_T)_{j} - (x^{(\pi)}_\star)_{j} \right)^2\right)\\
    & = \frac{1}{2}\sum_{j\in U} \frac{1}{|\mathcal{P}_U|}\sum_{\pi\in\mathcal{P}_U}  \left( (x_T)_{j} - (x^{(\pi)}_\star)_{j} \right)^2\\
    &\geq \frac{m}{2}\mathrm{Var}\left\{ \rho((w)_d - (w)_i) \right\}_{i = d-m+1}^d \\
    &=\frac{\rho^2 m}{2} \mathrm{Var}\{(w)_i\}_{i = d-m+1}^d\\
    &= \rho^2 m C^2 \mathrm{Var}\{\sqrt{i}\}_{i = d-m+1}^d
\end{align*}
where the first inequality is by~\eqref{eq:sc-lower-bound}, the second inequality lower bounds this maximal $\pi_\star$ by the average lower bound over $\mathcal{P}_U$, and the third inequality notes that each inner sum is minimized when $(x_T)_{j}$ is the average of $\{\rho((w)_d - (w)_{d-m+1}),\dots, \rho((w)_d - (w)_{d})\}$ yielding the stated variance lower bound.

To bound the above variance, we use that $\mathrm{Var}(X) = \frac{1}{2m^2}\sum_{i,j}(X_i-X_j)^2$. For $X_i=\sqrt{i}$, we can lower bound $(\sqrt{i}-\sqrt{j})^2 =(i-j)^2/(\sqrt{i}+\sqrt{j})^2\geq (i-j)^2/4d$. So $\mathrm{Var}(X)\geq \mathrm{Var}(i)/4d =  (m^2-1)/48d$, using that the variance of $m$ contiguous integers\footnote{{\color{blue} This variance of $m$ consecutive integers can be computed directly as
$$ \mathrm{Var}(i) = \frac{1}{2m^2}\sum_{i,j=1}^m(i-j)^2 = \frac{1}{2m^2}\sum_{\delta=0}^{m-1} 2(m-\delta) \delta^2 = \frac{1}{m}\sum_{\delta=0}^{m-1}\delta^2 - \frac{1}{m^2} \sum_{\delta=0}^{m-1} \delta^3 = \frac{m^2-1}{12}$$
where the first equality is by definition, the second notes $\delta=|i-j|$ occurs for $2(m-\delta)$ choices of $i,j$, and the final equalities rearrange terms and apply standard summation identities.}} is $(m^2-1)/12$.
Further, note that for $d=2T+2$, we have $m\geq T+2$. Then applying that $\rho\geq 2/d^2$ and $C^2 = \frac{3+2\sqrt{2}}{8} \mathrm{diam}(S_\pi)^2$, we conclude that there exists a problem instance resisting any given \texttt{FO-LMO} method, having
\begin{align*}
    f(x_T) - f(x_\star^{(\pi_\star)}) &\geq \rho^2 m C^2 \frac{m^2-1}{48d} \geq \frac{3+2\sqrt{2}}{8} \frac{ \mathrm{diam}(S_\pi)^2}{384(T+1)^2} > \frac{1}{528} \frac{\mathrm{diam}(S_\pi)^2}{(T+1)^2}
\end{align*}
where the last inequality above just reduces to a simpler fractional coefficient. Hence, $f$ and $S_{\pi_\star}$ together constitute a hard problem instance for the given first-order method, adversarially constructed. The case of general $L,\alpha>0$ follows by considering the rescaling formulas~\eqref{eq:rescalings}.

\section{An Extension of Lower Bounds to Modestly Smooth Sets}\label{sec:smooth}
Finally, we provide a direct approach to extend our span-based lower bounds to guarantees against methods applied to $\beta$-smooth sets. We do this by considering the smoothing of hard instances given by taking Minkowski sums with a ball $B(0,1/\beta)$. For sufficiently large values of $\beta=\Omega(1/\sqrt{\varepsilon})$, we show that this perturbation of the problem instance cannot notably improve worst-case performance. As a result, we find that no acceleration is possible for \texttt{LMO-span} methods on only modestly smooth sets. We leave open whether acceleration is possible when sets possess constant levels of smoothness. The numerical survey of~\cite{luner2024performance} using performance estimation techniques suggested that for small values of $T$, no clear big-O acceleration could be numerically identified for more general ranges of $\beta$.

Below, we provide lower bounds for optimization over smooth convex sets and over smooth, strongly convex sets. In the $\beta=\Omega(1/\sqrt{\varepsilon})$ regime, our smooth convex set lower bound matches the general optimal complexity for optimization over convex sets of~\cite{jaggi2013revisiting,lan2013complexity}. Similarly, in this regime, our smooth, strongly convex set lower bound matches the strongly convex complexity bound established in the previous section of $\Omega(\sqrt{L\, \mathrm{diam}(S)^2/\varepsilon})$. 

\begin{theorem}\label{thm:smooth}
For any $d > 1$ and $\beta > 0$, there exist a convex $\beta$-smooth set $S_\beta$ and a $1$-smooth, $1$-strongly convex function $f_\beta$ such that every \texttt{LMO-span} method applied to $(f_\beta, S_\beta)$ starting from $x_0=0$ has for all $t\leq d-1$
    \begin{equation}
        f_\beta(x_t) - \min_{x\in S_\beta}f_\beta(x) \geq  \max \left(0, \sqrt{\frac{d-t}{2}}\left(\frac{\mathrm{diam}(S_\beta) -  2/\beta}{\sqrt{2}d}+\frac{1}{\beta\sqrt{d}}\right)-\frac{1}{\sqrt{2}\beta}\right)^2  . 
    \end{equation}
    In particular, for any fixed budget $T \ge 1$, there exist $S_\beta$ and $f_\beta$ in dimension $d=2T$ such that
    \[
        f_\beta(x_T) - \min_{x\in S_\beta}f_\beta(x) \geq \max\left(0,\frac{\mathrm{diam}(S_\beta) - 2/\beta}{4\sqrt{T}} - \frac{\sqrt{2}-1}{2\beta }\right)^2 .
    \]
\end{theorem}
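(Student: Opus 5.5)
The plan is to smooth a classical non-strongly-convex zero-chain instance by taking a Minkowski sum with a ball. Fix $r>0$ and let $Q = \{x \mid \|x\|_1 \le r\}$ be the $\ell_1$-ball. Set
\[
S_\beta = Q + B(0,1/\beta).
\]
A Minkowski sum with $B(0,1/\beta)$ is $\beta$-smooth, since every boundary point has a unique unit normal and these normals are $\beta$-Lipschitz (the standard outer parallel body fact, e.g.\ from~\cite{liu2023gauges}). Its diameter is $\mathrm{diam}(S_\beta) = 2r + 2/\beta$, so $r = (\mathrm{diam}(S_\beta) - 2/\beta)/2$.

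For the objective, take $f_\beta(x) = \frac12\|x - x_\star\|_2^2$ with
\[
x_\star = \left(\frac{r}{d} + \frac{1}{\beta\sqrt d}\right)\mathbf 1 .
\]
This is the point $(r/d)\mathbf 1$ on the boundary face of $Q$, pushed out by $1/\beta$ along the unit normal $\mathbf 1/\sqrt d$. It therefore lies in $S_\beta$, so $\min_{S_\beta} f_\beta = 0$.

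The LMO splits as
\[
\mathtt{LMO}_{S_\beta}(p) = v(p) - \frac{p}{\beta\|p\|_2},
\]
where $v(p)$ is an LMO point of $Q$, i.e.\ a signed vertex $\pm r e_i$ with $i \in \operatorname{argmax}_i |(p)_i|$. Because the LMO may break ties adversarially, whenever several coordinates attain the max we choose the smallest index.

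\textbf{Invariant.} Exact zero-chains fail here, since the ball part $-p/(\beta\|p\|)$ spreads mass over all coordinates. Instead I will prove by induction that every point can be written as $y + u$, where $y \in \operatorname{conv}\{0, \pm r e_1, \dots, \pm r e_t\}$ and $\|u\|_2 \le 1/\beta$.
\begin{itemize}
\item The induction hypothesis I carry is that $x_k$, $z_k$, and $\nabla f_\beta(x_k)$ for $k \le t$ all have a common value on the coordinates $i \ge t+1$.
\item This common-tail structure passes to $p_t$ by the span argument of Lemma~\ref{lem:chain-SC}.
\item It then forces $v(p_t) \in \{\pm r e_1, \dots, \pm r e_{t+1}\}$: if a tail coordinate is maximal, all tail coordinates tie and the tie-break picks index $t+1$.
\item It also makes $z_{t+1} = v(p_t) - p_t/(\beta\|p_t\|)$ again constant on the tail $i \ge t+2$.
\end{itemize}
Since $x_t \in \operatorname{conv}\{0, z_1, \dots, z_t\}$, it inherits both the decomposition $x_t = y_t + u_t$ (the ball parts stay in the ball by convexity, and the vertex parts are supported on the first $t$ coordinates) and the common-tail property.

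\textbf{Bound.} Write $P$ for the projection onto the coordinates $i \ge t+1$, and note $P y_t = 0$. Then
\[
\|x_t - x_\star\|_2 \ \ge\ \|P(y_t - x_\star)\|_2 - \|u_t\|_2 \ \ge\ \sqrt{d-t}\left(\frac{r}{d} + \frac{1}{\beta\sqrt d}\right) - \frac1\beta .
\]
Substitute $r = (\mathrm{diam}(S_\beta) - 2/\beta)/2$, take the positive part, and use $f_\beta = \frac12\|\cdot\|^2$, which divides the bracket by $\sqrt 2$. This gives exactly the displayed bound.

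\textbf{Fixed budget.} Setting $d = 2T$ and $t = T$, the $1/\beta$ terms combine as $\frac{1}{2\beta} - \frac{1}{\sqrt2\,\beta}$. This yields $\max\bigl(0, \frac{\mathrm{diam}(S_\beta) - 2/\beta}{4\sqrt T} - \frac{\sqrt2 - 1}{2\beta}\bigr)^2$.

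\textbf{Main obstacle.} The delicate step is the induction. I must make sure the ball component never leaks a new coordinate into the vertex part $y$. I also must make sure the ball components, after convex combination, still have norm at most $1/\beta$ rather than accumulating. Both rest on convexity of $B(0,1/\beta)$ and on the tie-breaking rule. A secondary check is the rigorous verification of $\beta$-smoothness of the Minkowski sum, including at points over the vertices and faces of $Q$.
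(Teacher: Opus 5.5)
Your architecture is the same as the paper's. You smooth a Lan-type zero-chain polytope by adding $B(0,1/\beta)$, split $\mathtt{LMO}_{S_\beta}(p)$ into a vertex plus $-p/(\beta\|p\|_2)$, run the common-tail induction with smallest-index tie-breaking, and place $x_\star$ by pushing a face point out along $\mathbf 1/\sqrt d$. Your handling of the ball part via $x_t=y_t+u_t$ with $\|u_t\|_2\le 1/\beta$ is correct, and a bit cleaner than the paper's coordinatewise bound $|\delta_k|\le 1/(\beta\sqrt{d-k+1})$.

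The gap is the final step, ``This gives exactly the displayed bound.'' For the $\ell_1$-ball, $\mathrm{diam}(Q)=2r$, so your $x_\star$ has coordinates $\frac rd+\frac{1}{\beta\sqrt d}=\frac{\mathrm{diam}(S_\beta)-2/\beta}{2d}+\frac{1}{\beta\sqrt d}$. The statement needs $\frac{\mathrm{diam}(S_\beta)-2/\beta}{\sqrt2\,d}$ in that first term. What your argument actually establishes is
\[
f_\beta(x_t)\ \ge\ \max\left(0,\ \sqrt{\tfrac{d-t}{2}}\left(\tfrac{\mathrm{diam}(S_\beta)-2/\beta}{2d}+\tfrac{1}{\beta\sqrt d}\right)-\tfrac{1}{\sqrt2\,\beta}\right)^2 .
\]
For $d=2T$ this gives the leading term $\frac{\mathrm{diam}(S_\beta)-2/\beta}{4\sqrt{2T}}$ instead of $\frac{\mathrm{diam}(S_\beta)-2/\beta}{4\sqrt T}$.

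This is not slack in your estimates but a defect of the instance: for fixed $r$, the stated inequality is false on your instance once $\beta$ is large. Take $p_k=\nabla f_\beta(x_k)$ and $x_t=\frac1d\sum_{k\le t}z_k\in\operatorname{conv}\{0,z_1,\dots,z_t\}$. For large $\beta$ the vertex part of $z_{k+1}$ is $re_{k+1}$. Hence $\|x_t-x_\star\|_2\le\sqrt{d-t}\,\frac rd+\frac2\beta$, and
\[
f_\beta(x_t)\le\tfrac12\bigl(\sqrt{d-t}\,\tfrac rd+\tfrac2\beta\bigr)^2\to\tfrac{(d-t)r^2}{2d^2}.
\]
That is only half of $\frac{(d-t)r^2}{d^2}$, the limit of the claimed right-hand side.

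The missing ingredient is a base polytope whose face point has coordinates equal to $1/(\sqrt2\,d)$ times its diameter; the cross-polytope only gives $1/(2d)$. The paper uses the simplex $\{x\mid (x)_i\ge0,\ \sum_i(x)_i\le1\}=\operatorname{conv}\{0,e_1,\dots,e_d\}$. Its diameter is $\sqrt2$ for $d\ge2$, while the face point $\mathbf 1/d$ still has coordinates $1/d$. Its LMO can return $0$ when $\min_j(p)_j\ge0$, and otherwise $e_j$ for the smallest minimizing index $j$. With that base set, your induction, the support of $y_t$ on the first $t$ coordinates, and $\|u_t\|_2\le1/\beta$ carry over verbatim and yield both displayed bounds exactly.
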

\begin{theorem}\label{thm:mixed}
    For any $d > 1$ and $\beta > 0$, there exist a $\beta$-smooth, $1/(1+1/\beta)$-strongly convex set $S_\beta$ and a $1$-smooth, $1$-strongly convex function $f_\beta$ such that every \texttt{LMO-span} method applied to $(f_\beta, S_\beta)$ starting from $x_0 = 0$ has for all $t\leq d-1$
    \begin{equation}
        f_\beta(x_t) - \min_{x\in S_\beta}f_\beta(x) \geq \max\left(0,\sqrt{\frac{2}{5}\frac{(d-t)(\mathrm{diam}(S_\beta) -  2/\beta)^2}{(d+2)^3}} - \frac{1}{\sqrt{2}\beta}\right)^2  . 
    \end{equation}
    In particular, for any fixed budget $T \ge 1$, there exist $S_\beta$ and $f_\beta$ in dimension $d=2(T+1)$ such that
    \[
        f_\beta(x_T) - \min_{x\in S_\beta}f_\beta(x) \geq \max\left(0,\frac{\mathrm{diam}(S_\beta) - 2/\beta}{\sqrt{20}(T+2)} - \frac{1}{\sqrt{2}\beta }\right)^2  .
    \]
\end{theorem}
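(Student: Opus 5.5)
The plan is to smooth the hard instance of Theorem~\ref{thm:SC} by a Minkowski sum with a small ball and keep its zero-chain structure. Let $S$, $\nu$ and $x_\star=\nu\mathbf{1}$ be the instance of Section~\ref{subsec:Construction-SC} (with $L=\alpha=1$). Set
\[
S_\beta := S + B(0,1/\beta), \qquad f_\beta := f = \tfrac12\|x-\nu\mathbf{1}\|_2^2 .
\]

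\textbf{Step 1: structural properties of $S_\beta$.} These are fairly standard.
\begin{itemize}
\item \emph{Smoothness.} The set $S_\beta$ is the union of balls of radius $1/\beta$ centered in $S$. Each boundary point has a unique unit normal $n$, and it has the form $y+n/\beta$ with $y\in S$ and $n\in N_S(y)$. Since the nearest-point map onto $S$ is nonexpansive, the unit normal is $\beta$-Lipschitz on the boundary, so $S_\beta$ is $\beta$-smooth.
\item \emph{Strong convexity.} From the proof of Lemma~\ref{lem:SC}, $S=\bigcap_s B(-s\circ w,1)$. Equivalently, by the characterization of~\cite{vial1982strong}, $1$-strong convexity means $S$ is a summand of a radius-$1$ ball. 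Adding $B(0,1/\beta)$ to both $S$ and that ball shows that $S_\beta$ is a summand of a ball of radius $1+1/\beta$. Hence $S_\beta$ is $1/(1+1/\beta)$-strongly convex. I would cite the Minkowski-sum and strong-convexity facts from~\cite{vial1982strong,liu2023gauges} rather than reprove them.
\item \emph{Diameter.} By central symmetry, $\mathrm{diam}(S_\beta)=\mathrm{diam}(S)+2/\beta$.
\item \emph{Optimal value.} Since $x_\star\in S\subseteq S_\beta$, we have $\min_{S_\beta} f_\beta=0$.
\end{itemize}

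\textbf{Step 2: the LMO and a modified zero-chain invariant.} The LMO over a Minkowski sum is the sum of the LMOs. So for $p\neq 0$,
\[
z=\mathtt{LMO}_{S_\beta}(p)=y+u,\qquad y=\mathtt{LMO}_S(p)\ \text{given by~\eqref{eq:LMO}},\qquad u=-\frac{p}{\beta\|p\|_2}.
\]
The ball term $u$ generally has nonzero tail coordinates, so the support statement of Section~\ref{subsec:proof-SC} fails. I would replace it by the following invariant at each iteration $t$:
\begin{enumerate}
\item[(a)] $x_t$ and all $z_k$ with $k\le t$ are constant on coordinates $i\ge t+1$;
\item[(b)] $z_k=y_k+u_k$ with $y_k\in S$ supported on the first $k$ coordinates and $\|u_k\|_2=1/\beta$.
\end{enumerate}

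The induction step works as follows. By (a), every vector in the span~\eqref{eq:p-def} is constant on coordinates $\ge t+1$, because gradients are $x_k-\nu\mathbf{1}$. So $p_t$ has a common tail value $\sigma$. The proof of Lemma~\ref{lem:chain-SC} uses its hypothesis only through this constancy of $p_t$'s tail. It therefore gives $(y_{t+1})_i=0$ for $i\ge t+2$. Meanwhile $u_{t+1}\propto p_t$ is constant on coordinates $\ge t+1$, so $z_{t+1}$ is too. The iterate $x_{t+1}\in\operatorname{conv}\{0,z_1,\dots,z_{t+1}\}$ then inherits (a) at level $t+2$. I expect this step to be the main obstacle: one must check carefully that the new LMO component does not break the chain, and the key observation is that $u$ lies in the span of $p_t$.

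\textbf{Step 3: the lower bound.} Write $x_t=\sum\theta_k z_k=\bar y+\bar u$, where $\bar y=\sum\theta_k y_k$ (with $y_0=0$) vanishes on coordinates $\ge t+1$ and $\|\bar u\|_2\le 1/\beta$. By the triangle inequality,
\[
\|x_t-x_\star\|_2\ \ge\ \|\bar y-x_\star\|_2-\frac1\beta\ \ge\ \sqrt{d-t}\,\nu-\frac1\beta .
\]
Hence
\[
f_\beta(x_t)\ \ge\ \max\Bigl(0,\sqrt{\tfrac{d-t}{2}}\,\nu-\tfrac{1}{\sqrt2\beta}\Bigr)^2 .
\]
The proof of Theorem~\ref{thm:SC} shows that $\frac{d-t}{2}\nu^2\ge\frac25\frac{(d-t)\,\mathrm{diam}(S)^2}{(d+2)^3}$. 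Substituting $\mathrm{diam}(S)=\mathrm{diam}(S_\beta)-2/\beta$ gives the first claimed bound. Setting $d=2(T+1)$ and $t=T$, the same simplification as in Theorem~\ref{thm:SC} turns the square-root term into $\frac{\mathrm{diam}(S_\beta)-2/\beta}{\sqrt{20}(T+2)}$, which is the fixed-budget statement.
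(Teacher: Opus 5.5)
Your proposal is correct and follows essentially the paper's argument. You smooth the instance of Section~\ref{subsec:Construction-SC} by $B(0,1/\beta)$, split $\mathtt{LMO}_{S_\beta}(p)$ into the thresholded $S$-part plus $-p/(\beta\|p\|_2)$, propagate constant tails of $p_t$ inductively (this is what Lemma~\ref{lem:approx-chain} packages), and bound the suboptimality through the last $d-t$ coordinates.

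The deviations from the paper are harmless:
\begin{itemize}
\item \emph{Choice of minimizer.} The paper places the minimizer at $\nu_\beta\mathbf{1}$ on the boundary of $S_\beta$, but then uses only $\nu_\beta\ge\nu$. Your choice $x_\star=\nu\mathbf{1}\in S$ therefore gives the same bound.
\item \emph{Ball contribution.} Your triangle-inequality estimate $\|\bar u\|_2\le 1/\beta$ replaces the paper's coordinatewise bound $|\delta_k|\le 1/(\beta\sqrt{d-k+1})$. It only loses a factor $\sqrt{(d-t)/(d-t+1)}$ in the penalty term, which the paper discards anyway.
\item \emph{Smoothness sketch (small fix).} To get $\beta$-smoothness you need $I-P_S$ to be nonexpansive, where $P_S$ is the projection onto $S$. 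That follows from \emph{firm} nonexpansiveness of $P_S$. Plain nonexpansiveness of $P_S$ only yields $2\beta$-smoothness.
\end{itemize}
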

\noindent In both of these theorems, the lower bounds are only meaningful in the regime of $\beta=\Omega(1/\sqrt{\varepsilon})$ as otherwise, the two-term maximums above will take value zero, making the lower bound vacuous.

To prove these results, we use the following lemma which allows us to relate a set $S$ with a suitable zero-chain property to an approximate zero-chain property holding on the set $S_\beta = S+B(0,1/\beta)$ given by a Minkowski sum with a ball. Note that $S_\beta$ is $\beta$-smooth for any closed convex set $S$ by~\cite[Lemma 9]{liu2023gauges}. Further, the LMO for $S_\beta$ is given by summing the LMOs for $S$ and $B(0,1/\beta)$: by the additivity of linear minimization over Minkowski sums, it follows that
$$ \mathtt{LMO}_{S_\beta}(p) = \mathtt{LMO}_S(p) - \frac{1}{\beta} \frac{p}{\|p\|_2} . $$
We find that $\mathtt{LMO}_{S_\beta}(p)$ can inherit an approximate zero-chain property from $\mathtt{LMO}_S(p)$ as follows.

\begin{lemma}[Approximate Zero-Chain Property]\label{lem:approx-chain}
    Consider a compact convex set $S{\color{blue}\subseteq\mathbb{R}^d}$ and $f_\beta(x) = \frac{1}{2}\|x - \nu\mathbf{1}\|^2_2$ for some $\nu>0$. Let $S_\beta = S +B(0,1/\beta)$. Suppose that the base-set LMO has the property that: {\color{blue} if for each $k \leq t\leq d-2$, $x_k$ and $z_k$ each have identical values $(\cdot)_i$ across each of their coordinate indices $i \ge t+1$, then, for any $p_t$ satisfying~\eqref{eq:p-def},} $\hat z_{t+1} = \mathtt{LMO}_{S}(p_t)$ satisfies $(\hat z_{t+1})_i = 0$ for all $i \ge t+2$. Then, in this case, the smoothed-set LMO point $z_{t+1} = \mathtt{LMO}_{S_\beta}(p_t)$ satisfies
    \[
    (z_{t+1})_i = \delta_{t+1}\qquad \forall i \in\{ t+2,\dots,d\}
    \]
    where the common tail value $\delta_{t+1}$ is bounded by $|\delta_{t+1}| \le \frac{1}{\beta \sqrt{d-t}}$.
\end{lemma}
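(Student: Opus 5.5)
The argument rests on the Minkowski-sum decomposition of the LMO stated just before the lemma,
$$\mathtt{LMO}_{S_\beta}(p_t) = \mathtt{LMO}_S(p_t) - \frac{1}{\beta}\frac{p_t}{\|p_t\|_2},$$
and treats the two summands separately on the tail coordinates $i\in\{t+2,\dots,d\}$.

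For the first summand, the hypothesis on the base-set LMO applies directly. The method generates $x_k$ and $z_k$ with common tail values on coordinates $i\ge t+1$, and $p_t$ satisfies~\eqref{eq:p-def}. Hence $\hat z_{t+1}=\mathtt{LMO}_S(p_t)$ vanishes on every coordinate $i\ge t+2$, and on these coordinates $(z_{t+1})_i = -\frac{1}{\beta}\frac{(p_t)_i}{\|p_t\|_2}$.

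For the second summand, I would repeat the computation from the proof of Lemma~\ref{lem:chain-SC}. Every vector in the span~\eqref{eq:p-def} has a common value on coordinates $i\ge t+1$. This holds for $x_k-x_0=x_k$ and $z_k$ by assumption, and for $\nabla f_\beta(x_k)=x_k-\nu\mathbf{1}$ because $\nu\mathbf{1}$ is constant. So $(p_t)_i=\sigma$ for all $i\ge t+1$ and some scalar $\sigma$. Setting $\delta_{t+1}:=-\sigma/(\beta\|p_t\|_2)$ makes the tail of $z_{t+1}$ on $i\ge t+2$ identically equal to $\delta_{t+1}$. If $\sigma=0$, or in the degenerate case $p_t=0$ where the oracle may return a point with zero tail, we simply get $\delta_{t+1}=0$.

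The bound on $|\delta_{t+1}|$ comes from the fact that the coordinates $t+1,\dots,d$ of $p_t$ all equal $\sigma$, and there are $d-t$ of them. Therefore
$$\|p_t\|_2 \ge \sqrt{d-t}\,|\sigma|, \qquad |\delta_{t+1}| = \frac{|\sigma|}{\beta\|p_t\|_2} \le \frac{1}{\beta\sqrt{d-t}}.$$
The only delicate point is to count coordinate $t+1$ as part of the common block. It is easy to count only $t+2,\dots,d$, which gives the weaker $d-t-1$; because coordinate $t+1$ shares the same value $\sigma$, the $\sqrt{d-t}$ in the statement holds. Beyond this, no step is a real obstacle: the proof is just the additivity of the LMO over Minkowski sums together with the constant-tail structure of the span.
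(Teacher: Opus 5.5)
Your proposal is correct and follows the paper's proof essentially step for step. Both use the Minkowski-sum decomposition of the LMO, the vanishing of $\mathtt{LMO}_S(p_t)$ on coordinates $i\ge t+2$, the common value $\sigma$ of $p_t$ on the $d-t$ coordinates $i\ge t+1$, and the resulting bound $\|p_t\|_2\ge\sqrt{d-t}\,|\sigma|$. Your aside about $p_t=0$ is unnecessary, since \eqref{eq:p-def} already excludes the zero direction, and that exclusion is what makes $p_t/\|p_t\|_2$ well defined.
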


\begin{proof}
    By the hypothesis, for each $k \le t$, the vectors $x_k$ and $z_k$ each have all coordinates $i \ge t+1$ equal (within each vector). Since $x_\star = \nu \mathbf{1}$, the gradients $\nabla f_\beta(x_k) = x_k - x_\star$ also have $(\nabla f_\beta(x_k))_i$ constant among all $i \ge t+1$. As a result, any valid search direction $p_t$ satisfying~\eqref{eq:p-def}$\,$ is constant among its coordinates $i \ge t+1$. Denote this shared scalar value by $\sigma$.

    Let $\hat z_{t+1}=\mathtt{LMO}_S(p_t)$. By assumption, $(\hat z_{t+1})_i=0$ for all $i\ge t+2$. Since $p_t\neq 0$ under~\eqref{eq:p-def}, linear minimization over Minkowski sums gives
    \[
    z_{t+1}=\mathtt{LMO}_{S_\beta}(p_t)=\mathtt{LMO}_S(p_t)-\frac{1}{\beta}\frac{p_t}{\|p_t\|_2}
    =\hat z_{t+1}-\frac{1}{\beta}\frac{p_t}{\|p_t\|_2}.
    \]
    Therefore, for every $i \ge t+2$, the coordinate $(z_{t+1})_i$ is determined solely by the ball term, having
    $(z_{t+1})_i = - \frac{\sigma}{\beta \|p_t\|_2} =: \delta_{t+1}$.
    Hence, all such tail coordinates are equal.
    If $\sigma = 0$, then $\delta_{t+1} = 0$, satisfying the lemma's claim. If $\sigma\neq 0$, then since $p_t$ contains at least $d-t$ copies of $\sigma$ (at indices $t+1, \dots, d$), its two-norm is at least $\|p_t\|_2 \ge \sqrt{d-t}\, |\sigma|$. Rearrangement gives the claimed bound as $|\delta_{t+1}| = \frac{|\sigma|}{\beta \|p_t\|_2}
    \le \frac{|\sigma|}{\beta \sqrt{d-t}\, |\sigma|}
    = \frac{1}{\beta \sqrt{d-t}}.$
\end{proof}

\subsection{Proof of Theorem~\ref{thm:smooth}}
Given any {\color{blue}integer $d > 1$}, consider the simplex $S=\left\{x\in \mathbb{R}^d \mid \sum_{i=1}^d (x)_{i}\le 1, (x)_{i}\geq 0  \right\}$. This choice follows directly from that of Lan~\cite{lan2013complexity} where a simplex was shown to possess a zero-chain property, {\color{blue} sufficient for applying Lemma~\ref{lem:approx-chain}: Observe that each extreme point has at most one nonzero coordinate, so an adversarial LMO can return either the origin or the attaining basis vector with minimal index. If the latter coordinates of $x_k$ and $z_k$ are all constant, then the LMO can be chosen to return $(t+1)$th standard basis vector instead of any later basis vector. Hence the coordinates $i\geq t+2$ will always equal zero.}

From this, Lan provided a hard instance for any LMO method applied to general convex constrained optimization.
Fixing any $\beta>0$, we define the $\beta$-smooth set
$$ S_\beta := S + B(0,1/\beta) . $$
The diameter of $S_\beta$ is $\mathrm{diam}(S_\beta) = \mathrm{diam}(S)+2/\beta = \sqrt{2} + 2/\beta$. To keep the target minimizer on the boundary of this set,  we define the optimal solution $x_\star=(1/d + 1/(\beta\sqrt{d}))\mathbf{1}$. We continue as in previous constructions by setting $f_{\beta}(x):=\frac{1}{2}\|x-x_\star\|_2^2$.

Inductively applying Lemma~\ref{lem:approx-chain}, the iterate $x_t \in \operatorname{conv}\{x_0, z_1, \dots, z_t\}$ must have final $d-t$ coordinates $i\geq t+1$ equal with value bounded by $|(x_t)_i| \leq \max_{1 \le k \le t} |\delta_k| \le \frac{1}{\beta \sqrt{d-t+1}}$. Then the suboptimality $f_\beta(x_t) - f_\beta(x_\star) = \frac{1}{2}\|x_t-x_\star\|^2_2$ is lower bounded by
\begin{align*}
 f_\beta(x_t) - f_\beta(x_\star) &\ge \frac{1}{2} \sum_{i=t+1}^d \left((x_t)_i - \left(\frac{1}{d}+\frac{1}{\beta \sqrt{d}}\right)\right)^2     \\
 &\ge \frac{d-t}{2}\max \left(0, \frac{1}{d}+\frac{1}{\beta\sqrt{d}}-\frac{1}{\beta\sqrt{d-t+1}}\right)^2 \\
 &{\color{blue}= \max \left(0, \sqrt{\frac{d-t}{2}}\left(\frac{\mathrm{diam}(S_\beta) -  2/\beta}{\sqrt{2}d}+\frac{1}{\beta\sqrt{d}}\right)-\frac{1}{\sqrt{2}\beta}\sqrt{\frac{d-t}{d-t+1}}\right)^2}\\
 &\ge \max \left(0, \sqrt{\frac{d-t}{2}}\left(\frac{\mathrm{diam}(S_\beta) -  2/\beta}{\sqrt{2}d}+\frac{1}{\beta\sqrt{d}}\right)-\frac{1}{\sqrt{2}\beta}\right)^2
\end{align*}
{\color{blue} where the equality step distributes $(d-t)/2$ into the square and uses our formula for the diameter of $S_\beta$.}
Specializing to $t=T$ and $d=2T$, this provides a lower bound of
$$ f_\beta(x_T) - f_\beta(x_\star) \geq \max\left(0,\frac{\mathrm{diam}(S_\beta) -  2/\beta}{4\sqrt{T}} - \frac{\sqrt{2}-1}{2\beta }\right)^2  . $$

\subsection{Proof of Theorem~\ref{thm:mixed}}
Given any {\color{blue}integer $d > 1$}, consider the set $S$ previously defined in~\eqref{eq:S-simple} and the parameter $\nu$ defined by~\eqref{eq:nu-def}. Fix any $\beta > 0$ and define
$$ S_\beta := S + B(0,1/\beta)  . $$
From Lemma~\ref{lem:diam-SC}, it is immediate that the diameter of $S_\beta$ is $\mathrm{diam}(S_\beta) = \mathrm{diam}(S)+2/\beta = 2C(2-\sqrt{2}) + 2/\beta$. Further, $S_\beta$ is $\beta$-smooth and $\alpha = 1/(1+1/\beta)$-strongly convex by the calculus rules for Minkowski sums~\cite[Lemma 9]{liu2023gauges}.

{\color{blue} To place $x_\star$ on the boundary of $S_\beta$, we require a different choice of $\nu_\beta$ than the previous~\eqref{eq:nu-def} defining $\nu$.} We define $x_\star = \nu_\beta \mathbf{1}$ with $\nu_\beta$ as the unique positive root of
$$ \min_{x \in S} \|\nu_\beta \mathbf{1} - x\|_2 = \frac{1}{\beta}  . $$
Since $0 \in \mathrm{int}(S)$, the distance from $\eta \mathbf{1}$ to the set $S$ is zero for $0\leq \eta \leq \nu$. {\color{blue} Further since the distance function is convex and positive for $\eta>\nu$,} it must be strictly increasing for $\eta > \nu$, mapping $[\nu, \infty)$ to $[0, \infty)$. Thus, this equation uniquely defines a scalar $\nu_\beta > \nu$. Given this, we continue as in previous constructions by setting $f_\beta(x) := \frac{1}{2}\|x-x_\star\|_2^2$.

Inductively applying the approximate zero-chain property of $S_\beta$, the iterate $x_t \in \operatorname{conv}\{x_0, z_1, \dots, z_t\}$ must have final $d-t$ coordinates $i\geq t+1$ equal with value bounded by $|(x_t)_i| \leq \max_{1 \le k \le t} |\delta_k| \le \frac{1}{\beta \sqrt{d-t+1}}$. Then the suboptimality is lower bounded by
$$ f_\beta(x_t) - f_\beta(x_\star) \ge \frac{1}{2} \sum_{i=t+1}^d ((x_t)_i - \nu_\beta)^2 \ge \frac{d-t}{2} \max\left(0,\nu_\beta - \frac{1}{\beta \sqrt{d-t+1}}\right)^2  . $$
By the bounds developed in Section~\ref{subsec:proof-SC}, $\nu_\beta \ge \nu \ge \frac{2}{\sqrt{5}(d+2)^{3/2}} \mathrm{diam}(S)$. Hence
\begin{align*}
    f_\beta(x_t) - f_\beta(x_\star) &\geq \frac{d-t}{2} \max\left(0,\frac{2(\mathrm{diam}(S_\beta) -  2/\beta)}{\sqrt{5}(d+2)^{3/2}} - \frac{1}{\beta \sqrt{d-t+1}}\right)^2 \\
    &\geq \max\left(0,\sqrt{\frac{2}{5}\frac{(d-t)(\mathrm{diam}(S_\beta) -  2/\beta)^2}{(d+2)^3}} - \frac{1}{\sqrt{2}\beta}\right)^2  .
\end{align*}
Specializing to $t=T$ and $d=2(T+1)$, this provides a lower bound of
$$ f_\beta(x_T) - f_\beta(x_\star) \geq \max\left(0,\frac{\mathrm{diam}(S_\beta) -  2/\beta}{\sqrt{20}(T+2)} - \frac{1}{\sqrt{2}\beta }\right)^2  . $$

{\color{blue}
\section{Conclusion}
We have shown that the classical zero-chain lower bounds in the style of Nemirovski and Yudin can be extended to LMO methods over strongly convex sets. From our new adversarial oracle constructions, we provide a $\Omega(L\,\mathrm{dist}(S)^2/T^2)$ lower bound for $\alpha$-strongly convex problem instances with diameter $\Theta(1/(\alpha T))$. This only partially matches the rates of Garber and Hazan~\cite{garber2015faster}, differing in terms depending on the curvature $\alpha$ by a factor of $1/T^2$. Identification of an optimal algorithm and matching lower bound is left open. The PEP framework~\cite{drori2014performance,Interpolation,Interpolation2,luner2024performance} may be useful to this end, especially in tightening constants in bounds.

The fact that our theory requires $\mathrm{diam}(S)$ to be set as a function of the parameters $L, \alpha, \varepsilon$ is a weakness. While a general selection of $\mathrm{diam}(S)$ cannot be allowed (see the discussion following Theorem~\ref{thm:main}), more nuanced theory is needed. Likewise, our secondary results above for smooth sets were limited to the regime of ``modestly smooth sets''. Guarantees capturing the effect of larger set smoothness parameters are also of interest.

Finally, we note that the feasibility requirement of our lower bounding theory may be relaxed. Future work could characterize performance (upper and lower bounds) in terms of $f(x_T) - f(x_\star) + \lambda\mathrm{dist}(x_T,S)$ for an appropriate multiplier $\lambda$. Similarly, one could bound tradeoffs between suboptimality $f(x_T) - f(x_\star)$ and infeasibility $\mathrm{dist}(x_T,S)$ convergence. In settings where constraints are not absolute, this may be interesting.

}

    \paragraph{Acknowledgments.} Benjamin Grimmer was supported as an Alfred P. Sloan Foundation fellow.

    {\small
    \bibliographystyle{unsrt}
    \bibliography{bibliography}
    }
\end{document}